\newcommand{\rrvert}{\vert}
\newcommand{\llvert}{\vert}
\newtheorem{lem}[thm]{Lemma}
\newtheorem{cor}[thm]{Corollary}
\newcommand{\Expect}{\mathbb{E}}
\newcommand{\eqref}[1]{(\ref{#1})}
\newcommand{\Prob}{\mathbb{P}}
\newcommand{\ball}{\operatorname{ball}}
\renewcommand{\d}{\mathrm{d}}
\newcommand{\sgn}{\operatorname{sgn}}
\renewcommand{\issue}[1]{}
\def\sfrac#1#2{#1/#2}
\def\afrac#1#2{#1/(#2)}
\def\sklfrac#1#2{(#1/#2)}
\begin{document}
\begin{frontmatter}

\title{Coupling, local times, immersions}
\runtitle{Coupling, local times, immersions}

\begin{aug}
\author[A]{\inits{W.S.}\fnms{Wilfrid S.} \snm{Kendall}\corref{}\ead[label=e1]{w.s.kendall@warwick.ac.uk.com}\ead[label=e2,url]{www.warwick.ac.uk/wsk}}
\address[A]{Department of Statistics, University of Warwick, Coventry
CV5 6FQ, UK.\\ \printead{e1,e2}}
\end{aug}

\received{\smonth{12} \syear{2012}}
\revised{\smonth{1} \syear{2014}}

%
\begin{abstract}
This paper answers a question of \'{E}mery [In \textit{S\'eminaire de Probabilit\'es {XLII}} (2009) 383--396 Springer] by constructing an
explicit coupling of
two copies of the Bene\v{s} \textit{et al.} [In \textit{Applied Stochastic Analysis} (1991) 121--156 Gordon \& Breach] diffusion (BKR
diffusion), neither of which starts at the origin, and whose natural
filtrations agree.
The paper commences
by surveying
probabilistic coupling,
introducing the formal definition of an \emph{immersed coupling} (the
natural filtration of each component is immersed in a common underlying
filtration; such couplings
have been described as \emph{co-adapted} or \emph{Markovian} in older
terminologies)
and of an \emph{{equi-filtration}} coupling (the natural filtration of
each component is immersed in the filtration of the other; consequently
the underlying filtration is simultaneously the natural filtration for
each of the two coupled processes).
This survey is
followed by a
detailed
case-study of the simpler but potentially thematic problem of coupling
Brownian motion together with its local time at $0$.
This problem possesses its own intrinsic interest as well as being
closely related to the BKR coupling construction.
Attention focusses on a simple
immersed (co-adapted) coupling, namely the reflection/synchronized coupling.
It is shown that this coupling is optimal amongst all immersed
couplings of
Brownian motion together with its local time at $0$, in the sense of
maximizing the coupling probability at all possible
times, at least when not started at pairs of initial points lying in a
certain singular set. However
numerical evidence indicates that the coupling is \emph{not} a maximal
coupling, and is a simple but non-trivial instance
for which this distinction occurs.
It is shown how the reflection/synchronized coupling
can be converted into a successful {{equi-filtration}} coupling, by
modifying the coupling
using a deterministic time-delay and then
by concatenating an infinite sequence of such modified couplings.
The construction of an explicit {equi-filtration} coupling of
two copies of the BKR diffusion follows by a direct generalization,
although the proof of success for the BKR coupling
requires somewhat more analysis than in the local time case.
\end{abstract}

%
\begin{keyword}
\kwd{bang-bang control}
\kwd{BKR diffusion}
\kwd{Brownian motion}
\kwd{co-adapted coupling}
\kwd{{equi-filtration} coupling}
\kwd{coupling}
\kwd{excursion theory}
\kwd{filtration}
\kwd{immersed coupling}
\kwd{L\'evy transform}
\kwd{local time}
\kwd{Markovian coupling}
\kwd{maximal coupling}
\kwd{optimal immersed coupling}
\kwd{reflection coupling}
\kwd{reflection/synchronized coupling}
\kwd{stochastic control;
synchronized coupling}
\kwd{Tanaka formula}
\kwd{Tanaka SDE}
\kwd{value function}
\end{keyword}

\end{frontmatter}

\section{Introduction}\label{sec:introduction}
We begin with a brief survey of probabilistic coupling, which serves
both to introduce some key concepts and
to establish a context for the results proved in this paper.
The concept of coupling has a long and distinguished history, dating
back to Doeblin \cite{Doeblin-1938}
(a biographical appreciation is given by Lindvall
\cite{Lindvall-1991}).
The method is now the subject of two scholarly expositions (Lindvall \cite{Lindvall-1992},
Thorisson \cite{Thorisson-2000}),
and has become a standard tool
of the working probabilist
(a somewhat more general concept appears in ergodic theory as the
notion of a ``joining''). Historically the thematic problem for
coupling is that of constructing two copies of a given process
on the same sample space,
starting at two different starting points but eventually coinciding.
Such a coupling is said to be \emph{successful}.
In fact, many applications of coupling do not address the objective of
eventually coinciding; nevertheless the thematic
problem has been formative for the theory and remains significant in
developing methods and intuition.
Probabilistic coupling in general has found application throughout
probability, for example in construction of gradient estimates, in
distributional approximation (for instance, Stein--Chen approximation),
in perfect simulation, and in monotonicity results for heat equations
in insulated domains. The study of coupling in its own right is
therefore a foundational topic for probability theory.

A landmark development in the study of coupling was the introduction of
the notion of \emph{maximal coupling}: a coupling which simultaneously
maximises the chances of succeeding before time $t$ for all possible
$t$.
Perhaps it will surprise the reader to learn that maximal couplings
always exist: this was established by Griffeath
\cite{Griffeath-1975} for
time-homogeneous discrete Markov chains
and by Goldstein \cite{Goldstein-1978} for more
general discrete-time
processes, based on a tail $\sigma$-algebra condition.
(Note that even a maximal coupling need not necessarily have
probability $1$ of succeeding!)
See also the very explicit construction
given by Pitman \cite{Pitman-1976} for
time-homogeneous discrete Markov
chains, Sverchkov and Smirnov's \cite
{SverchkovSmirnov-1990} note on coupling for
continuous time
using the $J_1$ topology, and Thorisson's \cite
{Thorisson-1994} notion of
\emph{shift coupling}, which weakens the coupling requirement by
allowing for general time-shifting of the coupled processes. (An
informative treatment of some subtleties
is given in the treatment of ``faithful coupling'' in Rosenthal \cite{Rosenthal-1997}.)

In general, the construction of maximal couplings is a demanding
business: for
substantial applications the task of construction is liable to require
at least as much knowledge of
the process in question as might be needed to solve the original
problem to which the coupling method is to be applied.
(Notwithstanding this general and justifiable pessimism, the
simple \emph{reflection coupling} of Brownian motion is a successful
maximal coupling.
Attention was originally drawn to this construction by the influential
unpublished preprint of Lindvall \cite
{Lindvall-1982a}.) More commonly, one
works with less powerful couplings that are more easily constructed and
analyzed,
such as ``co-adapted couplings''. Co-adapted couplings (sometimes also
called ``Markovian couplings'' in the context
of coupling of Markov processes) require the two copies
of the processes concerned to be adapted to the same filtration, and to
have the same conditional laws based on conditioning on filtration $
\sigma$-algebras.
In the succinct language of filtrations (cf. Beghdadi-Sakrani and Emery
\cite{BeghdadiSakraniEmery-1999}, \'{E}mery \cite{Emery-2005,Emery-2009}),\issue{Vershik's
standardness criterion, see Vershik reference in
Emery-Schachermeyer in SemProb 35, nb erratum about lemma 17(?), and
relationship to co-adapted/{equi-filtration} coupling. See also
SemProb 38 article by Stephan Laurent (see also this author in Russian
journal) re eg I-coziness.}
the natural filtrations of the two processes must both be \emph
{immersed} in a common filtration (that is, the martingales of the
natural filtrations must remain martingales in the larger common filtration).
We therefore
propose and adopt the new terminology of \emph{immersed couplings} to
replace the nomenclature of
co-adapted or Markovian couplings:

\begin{defn}\label{def:immersed-coupling}
Consider two processes $X$ and $Y$. An \emph{immersed coupling} of
$X$ and $Y$ is a construction
of copies $\hat{X}$, $\hat{Y}$ of $X$, $Y$, defined on the
same probability space $(\Omega,\mathcal{F},\mathbb{P})$,
and adapted to the same filtration $\{\mathcal{F}_t\dvt t\geq0\}$, such
that any martingale in the natural
filtration of $\hat{X}$ remains a martingale in the common
filtration $\{\mathcal{F}_t\dvt t\geq0\}$, and
likewise for any martingale in the natural
filtration of $\hat{Y}$.
\end{defn}

The extent to which immersed couplings are less powerful than maximal couplings
was assessed in a preliminary way by
Burdzy and Kendall \cite{BurdzyKendall-2000}, where they
were studied in the guise of
Markovian couplings.
As part of a study of \emph{shy coupling} (the antithesis of the
thematic coupling problem, in which one seeks to construct coupled
copies which almost surely stay at least a fixed positive distance apart),
Kendall \cite{Kendall-2009a}, Lemma~6, records a
characterization of immersed
couplings of Brownian motion which has
long been part of the general folklore of stochastic calculus: any
immersed coupling of two $d$-dimensional Brownian motions $A$ and $
B$ can be represented by the stochastic differential equation
%
\begin{equation}
\label{eq:folklore} \d A = J^\top\,\d B + K^\top\,\d C,
\end{equation}
where $C$ is a Brownian motion independent of $B$ (perhaps
to be defined after augmenting the filtration, if this is necessary to
construct $C$), and $J$ and $K$ are two $(d\times d)$
matrix-valued predictable random processes satisfying $J^\top J+K^\top
K=\mathbb{I}$ where $\mathbb{I}$ is the $(d\times d)$
identity matrix.
We can view $J$ as a predictable matrix-valued control for a somewhat
degenerate stochastic control problem.
(An informal discussion of links between stochastic control and
coupling can be found in Kendall \cite{Kendall-2007}, Section~2.)

The terminology of immersed couplings is useful not only for its
succinct definition, but also because it draws attention to a stricter
constraint. Additionally,
one could demand that either of the coupled copies could be constructed
from the other, which corresponds to the requirement that the coupling possesses
the \emph{{equi-filtration}} property:

\begin{defn}\label{def:isofiltration-coupling}
Consider two processes $X$ and $Y$. An \emph{{equi-filtration}
coupling} of $X$ and $Y$ is an immersed coupling
$\hat{X}$, $\hat{Y}$ such that the natural filtration of $\hat
{X}$ is equal to that of~$\hat{Y}$.
\end{defn}

Of course it is the case that the {equi-filtration} coupling property
follows from
each natural filtration being immersed in the other.
Consider one of the simplest nontrivial examples of coupling;
Lindvall's Brownian reflection coupling is not only
a successful maximal coupling,
but also immersed and even {equi-filtration}. This is a very special
case; for example
Connor \cite{Connor-2007a}, Ph.D. thesis,
considers reflection coupling
of the Ornstein--Uhlenbeck process, if one copy of the
Ornstein--Uhlenbeck process is started from $0$
and the other copy is started from equilibrium.
He notes that reflection coupling of the driving Brownian motions is
clearly immersed but is not maximal even in the simple case.
(Further
exploration of the
difference between maximality and immersion for couplings can be found
in Kuwada and Sturm \cite{KuwadaSturm-2007}, Kuwada \cite
{Kuwada-2009}.)

The first objective of this paper
is to investigate
and explore properties of the construction of immersed and
{equi-filtration} couplings in the simple case of coupling Brownian
motion together with local time at $0$.
As a coupling problem this is only a little more
complicated than the basic Brownian motion case, but it produces an
example of existence of a
successful immersed coupling (the reflection/synchronized coupling,
Definition~\ref{coupling:reflection/synchronized})
which is optimal among
all immersed couplings but (according to numerical evidence) is not maximal
(Theorems \ref{thm:immersed-optimality}, \ref{thm:rates}, \ref
{thm:isofiltration-coupling} below). The reader may wish to compare
other work on optimal immersed couplings for random walks on the line,
on hypercubes and on hypercomplete graphs (Rogers \cite{Rogers-1999},
Connor and Jacka~\cite{ConnorJacka-2008}, Connor~\cite
{Connor-2009}).

A significant motivation for this study arises from the consideration
that the reflection coupling has been a model for a wide variety of
more sophisticated immersed couplings. For example, reflection coupling
has been generalized to the case of elliptic diffusions
with smooth coefficients (Lindvall and Rogers \cite{LindvallRogers-1986},
Chen and Li \cite{ChenLi-1989}),
and also to the case of Riemannian Brownian motion (Kendall \cite{Kendall-1998e}), in which case there are
connections with curvature
properties.
More recently, coupling techniques have been extended to cover some
cases of hypoelliptic diffusions
(Ben Arous \textit{et al}. \cite{BenArousCranstonKendall-1995},
Kendall and Price \cite{KendallPrice-2004}, Kendall \cite{Kendall-2007,Kendall-2009d});
essentially
the issue here is to couple simultaneously not only Brownian motion but
also one or more
path functionals of the Brownian motion, namely time integrals,
iterated time integrals,
and It\^o stochastic area integrals. Here it is necessary to augment
the reflection coupling strategy with
other coupling strategies, notably synchronous coupling and rotation coupling.
In the stochastic differential framework \eqref{eq:folklore},
synchronous coupling corresponds to $K=0$ and $J=\mathbb{I}$, while
rotation coupling corresponds to $K=0$ and $J$ equal to a $d$-dimensional rotation.
(It is interesting to compare this direction of research with the work
of \'{E}mery \cite{Emery-2005}, Theorem~1; this characterizes
Brownian filtrations using the notion of ``self-coupling'' -- jointly
immersed Brownian filtrations for which a prescribed scalar functional
is approximately coupled.)

While Brownian motion
together with local time at $0$ does not form a hypoelliptic
diffusion in the strict sense, nevertheless
the question of its coupling theory is clearly related to the
hypoelliptic couplings mentioned above.
The successful reflection/synchronized coupling is not only simple,
but also (in view of the results proved here)
evidently the right coupling for this situation.
It is reasonable to hope that a careful and complete study of the
reflection/synchronized coupling will be helpful
in formulating and studying coupling methods for more general
situations, as well as suggestive for coupling theory for hypoelliptic
diffusions.
The second objective of the paper is to demonstrate the first fruits of
this aspiration
and is fulfilled in Theorem~\ref{thm:BKR-isofiltration-coupling}
below, exhibiting a successful {equi-filtration} coupling for the BKR diffusion.
The approach follows closely the
methods developed for the reflection/synchronized coupling for
Brownian motion
together with local time at $0$.

In summary, then,
this paper conducts a case study of an
almost surely successful coupling of a simple non-elliptic diffusion in
the context of immersed and {equi-filtration} couplings; namely the
reflection/synchronized coupling for Brownian motion together with
local time at $0$. The results of this case study are then applied to
answer a question raised by \'{E}mery \cite
{Emery-2009}, by constructing an explicit
{equi-filtration} coupling for BKR diffusions neither of which are
begun at the origin.

Section~\ref{sec:brownian-local-time} introduces the simple reflection/synchronized coupling for Brownian motion together
with local time at $0$, exploiting Tanaka's formula and the L\'evy
transform to re-cast the problem in terms of coupling Brownian motion
together with a variant of its running supremum. The simplicity of this
coupling allows for explicit calculation: in particular
it is shown that the reflection/synchronized coupling is
optimal amongst all immersed couplings, at least when their starting
conditions are non-singular (here ``optimal'' means optimal in the
sense of
maximizing the probability of coupling by a given time $t$, for all
possible times $t$, while ``singular'' means that in the re-cast form
the two running suprema
processes do not start from the same level). The moment-generating function
for the coupling time is computed, and compared numerically with the
moment-generating function for the maximal coupling time:
numerical calculation
then indicates
that the reflection/synchronized coupling cannot be a maximal coupling.

The reflection/synchronized coupling is an immersed coupling but is
not {equi-filtration}. Section~\ref{sec:equi-filtration}
shows that if the couplings are perturbed by a simple deterministic
time delay then it is possible to use a sequence of the resulting
approximate couplings to construct a
successful {equi-filtration} coupling of Brownian motion together with
its local time at~$0$.

Section~\ref{sec:BKR} introduces the BKR diffusion, sketches the
immersed coupling described in \'{E}mery \cite{Emery-2009}
(which bears a strong family resemblance to the reflection/synchronized coupling of Section~\ref{sec:brownian-local-time},
and which therefore is described here as a variant reflection/synchronized coupling), and notes that significant components
of this variant reflection/synchronized coupling are actually
immersed in the natural filtrations of both coupled diffusions.
This is used to generate a successful {equi-filtration} coupling using
the strategy of Section~\ref{sec:equi-filtration}, hence answering
\'{E}mery's question.

The paper is concluded by Section~\ref{sec:conclusion}, which reviews
the results of the paper and discusses some further
research questions.

\section{Coupling Brownian motion together with local time}\label
{sec:brownian-local-time}
The purpose of this section is to exhibit a successful immersed (but
\emph{not} {equi-filtration}) coupling for the two-dimensional
diffusion made up of Brownian motion together with local time at zero.
The simple construction (known already to \'{E}mery
\cite{Emery-2009}) is
based on Tanaka's formula for Brownian
local time, and permits informative exact computations. In particular
we are able to prove optimality of this coupling amongst all immersed couplings
(Theorem~\ref{thm:immersed-optimality}), so long as the initial
conditions are non-singular in a manner to be explained below,
and thus to establish the optimal rate of immersed coupling (Theorem~\ref{thm:rates}).
We note in passing that this notion of optimality is distinct from the
notion of $\rho$-optimality introduced by Chen
\cite{Chen-1994}.

\subsection{Representation via the Tanaka formula}\label{sec:tanaka}
Recall the Tanaka formula or L\'evy transform, expressing Brownian
local time at $0$ in terms of a stochastic integral:
%
\begin{equation}
\label{eq:tanaka} \d|X| = \sgn(X)\,\d X + \d L^{(0)} .
\end{equation}
Here $X$ is a real Brownian motion and $L^{(0)}$ is the local time
accumulated by $X$ at $0$.
An immediate consequence of \eqref{eq:tanaka} is L\'evy's famous transform,
which represents $|X|$ and $L^{(0)}$ in terms of
a new real Brownian motion $B$ and $S$, a variant on the running
supremum of $B$:
%
\begin{eqnarray}\label{eq:supremum}
B &=& L^{(0)} - |X| ,
\nonumber
\\[-8pt]\\[-8pt]
S &=& L^{(0)} .\nonumber
\end{eqnarray}
It follows from \eqref{eq:supremum} that $B=L^{(0)}_0-|X_0|-\int\sgn
(X)\,\d X$ and $S_t=\max\{L^{(0)}_0, \sup\{B_s\dvt s\leq t\}\}$,
so $S$ does not start at $B_0$ if $|X_0|>0$.

Evidently it suffices to exhibit successful coupling strategies for $
(B, S)$; off the line $X=0$, this
L\'evy transform forms a $2\dvtx 1$ representation of
the original pair $(X, L^{(0)})$;
the two pre-images under the L\'evy transform meet together when the
Brownian motion $X$ hits $0$.

\subsection{The reflection/synchronized coupling
for immersed coupling of Brownian motion together
with local time}\label{sec:algorithm}
The above considerations show that it suffices to exhibit a successful
immersed coupling between
(a) the pair $(B,S)$ above
and (b) a copy $(\widetilde{B},\widetilde{S})$ started with
different initial conditions.
Were the corresponding $X$ and $\widetilde{X}$ not to agree at
coupling, one could simply continue with synchronized coupling
until $|X|=|\widetilde{X}|$ hits $0$. However, the reflection/synchronized coupling given below actually terminates with $B=S$ and
$\widetilde{B}=\widetilde{S}$,
so at the end of this coupling we already have $|X|=|\widetilde
{X}|=0$.
Without loss of generality, suppose that $B_0=L^{(0)}_0-|X_0|\geq
\widetilde{B}_0=\widetilde{L}^{(0)}_0-|\widetilde{X}_0|$.

\begin{defn}[(Reflection/synchronized coupling)]\label
{coupling:reflection/synchronized}
The \emph{reflection/synchronized coupling algorithm} consists of
two stages:
\begin{enumerate}[2.]
\item[1.]\emph{Reflection coupling} ($\d B=-\d\widetilde{B}$) till
the time $T_1=\inf\{t\dvt  B_t=\widetilde{B}_t\}$ (the first time that
$B$ and $\widetilde{B}$ meet); then
(if $(B, S)$ is not already coupled with $(\widetilde B, \widetilde
S)$).
\item[2.]\emph{Synchronized coupling} ($\d B=+\d\widetilde{B}$), run
from time $T_1$
until the time\vspace*{1pt}
$T_2=\inf\{t>T_1\dvt  B_t\equiv\widetilde{B}_t=S_{T_1}\vee\widetilde
{S}_0\}$
that $B\equiv\widetilde{B}$ first hits the higher level $
S_{T_1}\vee\widetilde{S}_0$ after time~$T_1$.
\end{enumerate}
\end{defn}

Note that at the end of stage $2$ we have $B=S$ and $\widetilde
{B}=\widetilde{S}$, so $|X|=|\widetilde{X}|=0$.

It is possible for the coupling of $(B,S)$ and $(\widetilde
{B},\widetilde{S})$ to be abbreviated to a one-stage (reflection)
coupling in case $S_0=\widetilde{S}_0$, for
if it happens that $B$ (and therefore $\widetilde{B}$)
both stay below $S_0=\widetilde{S}_0$ up to time $T_1$
then coupling will be successfully achieved at time $T_{\mathrm
{couple}}=T_1<T_2$. However we will see below that this case
can be viewed as singular, as a consequence of Lemma~\ref{lem:singular}.
Moreover if $X_0$ and $\widetilde{X}_0$ are of opposite sign then
they will not couple at this
stage: it still will be necessary to proceed to completion of the
synchronization stage so that
$B_{T_2}=S_{T_2}=\widetilde{B}_{T_2}=\widetilde{S}_{T_2}$
and therefore $X_{T_2}=\widetilde{X}_{T_2}=0$.

If not completed at the end of the reflection stage,
then the coupling will succeed at the time $T_{\mathrm{couple}}=T_2$; at that moment in time it is the case that simultaneously
$B=\widetilde{B}$ (since they are coupled by synchronization after
meeting at time $T_1$) and $S=\widetilde{S} (=B=\widetilde{B})$.
Note that the $[T_1, T_2]$ stage
depends on the behaviour of $B$ over the initial time interval $
[0,T_1]$.
Indeed,
note that by construction (and particularly by choice of initial
conditions\vspace*{2pt} $B_0=L^{(0)}_0-|X_0|\geq\widetilde{B}_0=\widetilde
{L}^{(0)}_0-|\widetilde{X}_0|$)
it is the case that $\widetilde{B}$ will stay below or equal to $
B$ until time $T_2$,
and hence $S_{T_1}\vee\widetilde{S}_0=S_{T_1}\vee\widetilde
{S}_{T_1}$.
The construction is illustrated in Figure~\ref{fig:coupling}.
%
\begin{figure}[b]

\includegraphics{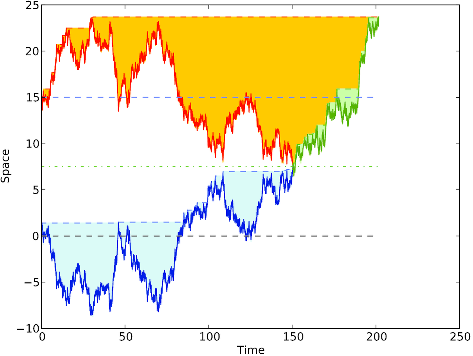}

\caption{Illustration of a successful reflection/synchronized
coupling of Brownian motion $B$ together with~$S$.}
\label{fig:coupling}
\end{figure}

The coupling is almost surely successful, since both the first and
second stages correspond to times taken for real Brownian motion to hit
specified levels. Indeed,
the coupling of $(B,S)$ and a copy $(\widetilde{B},\widetilde
{S})$ is {equi-filtration}, not just immersed,
because the stopping times $T_1$
and
$T_2$ can be rewritten as hitting times for $B$ in its natural filtration
(successively, from $B_0$ to $\tfrac12(B_0+\widetilde{B}_0)$,
then from $\tfrac12(B_0+\widetilde{B}_0)$
to $S_{T_1}\vee\widetilde{S}_0$),
and similarly also as hitting times for $\widetilde{B}$ in its
own natural filtration.
(In particular, $T_1$ can be rewritten as the hitting time of $
\widetilde{B}$ moving from $\widetilde{B}_0$
to $\tfrac{1}{2}(B_0+\widetilde{B}_0)$.)
The corresponding immersed coupling of $(X,L^{(0)})$ with $
(\widetilde{X},\widetilde{L}^{(0)})$
cannot be an {equi-filtration} coupling,
because the natural filtration of $X$ has to be augmented in order to
supply appropriate randomness for the signs of the excursions of $
\widetilde{X}$ from zero.
(See \'{E}mery \cite{Emery-2009}, Lemma~5, for a similar
augmentation in the
more complicated case of BKR diffusions.)

There is a natural reformulation of reflection/synchronized coupling
in terms of stochastic calculus:
set $\d\widetilde{B}=J\,\d B$ up to the coupling time $T_2$, where
the predictable control $J$ is given very simply by
%
\begin{equation}
\label{eq:co-adapted-coupling} J_t = \cases{ -1 & \quad \mbox{for} $t < T_1$
\mbox{(reflection stage)},
\cr
+1 &\quad  \mbox{for} $T_1 \leq t \leq
T_2$ \mbox{(synchronized stage)}. }
\end{equation}
The failure of mutual immersion for the coupling of $(X,L^{(0)})$
with $(\widetilde{X},\widetilde{L}^{(0)})$ is immediately apparent
from the
relevant stochastic differential equation
%
\begin{equation}
\label{eq:weak-not-strong} \d\widetilde{X} = \sgn(\widetilde{X}) J \sgn(X) \,\d X ,
\end{equation}
which is an instance of Tanaka's classic example of a Brownian motion $
\widetilde{X}$,
defined as a weak but not strong solution of a stochastic differential equation
driven by a second Brownian motion $\int J \sgn(X) \,\d X$.

\subsection{Optimality amongst immersed couplings}\label{sec:optimal}
The reflection/synchronized coupling strategy is faster than all
other immersed couplings, in the sense that it minimizes
\[
\Prob[T_{\mathrm{couple}} > t]
\]
simultaneously for all $t>0$, except perhaps for the singular case of
$S_0=\widetilde{S}_0$ (this
singular case is discussed around the statement of Lemma~\ref
{lem:singular} below).
Equivalently the distribution of the coupling time $T_{\mathrm
{couple}}$ for any immersed coupling exhibits stochastic domination
over the distribution
of $T_{\mathrm{couple}}$ for the reflection/synchronized coupling
(except perhaps in singular cases).

Before stating and proving a theorem which asserts this optimality, we
first establish some preparatory lemmas.
The first one concerns the coupling of two Brownian motions on $
[0,\infty)$
that are stopped when the first one of them hits $0$.

\begin{lem}\label{lem:quadrant-coupling}
Suppose the planar process $(U,V)$ is composed of two Brownian
motions which are related by an immersed coupling,
and
suppose that $(U,V)$ is started at a point $(U_0,V_0)$ in the
interior of the quadrant
$\{(u,v)\dvt u\geq0,v\geq0\}$. Let $T$ be the first time that $
(U,V)$ hits the boundary of the quadrant. Suppose it is desired to
construct the coupling so that $\Prob[(U_T,V_T)=(0,0)]=1$.
This is possible if and only if $U_0=V_0$ and the coupling is the
synchronized coupling.
\end{lem}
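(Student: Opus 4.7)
The plan is to handle the two directions separately. The ``if'' direction is immediate: if $U_0=V_0$ and the coupling is synchronized, then $U\equiv V$, so both coordinates hit $0$ simultaneously at the common time $T=T_U=T_V$, giving $(U_T,V_T)=(0,0)$. For the substantive ``only if'' direction, I assume $(U_T,V_T)=(0,0)$ almost surely, equivalently $T=T_U=T_V$ a.s., where $T_U:=\inf\{t:U_t=0\}$ and similarly $T_V$.

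\emph{Step~A (initial values).} Since the coupling is immersed, each of $U$ and $V$ is a Brownian motion in the common filtration; consequently $T_U$ has the L\'evy one-sided stable density $f_{U_0}(t)=U_0(2\pi t^3)^{-1/2}\exp(-U_0^2/(2t))$, whose scale parameter is $U_0$, and likewise $T_V$ has scale $V_0$. The almost-sure identity $T_U=T_V$ forces these two distributions to coincide, so $U_0=V_0=:a$.

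\emph{Step~B (forcing synchronisation).} Granted $U_0=V_0$, I next argue $U\equiv V$ on $[0,T]$ by a strong-Markov restart of Step~A. Fix $\delta>0$ and set $\sigma_\delta:=\inf\{t:U_t-V_t=\delta\}$, which is a stopping time in the common filtration. Suppose for contradiction that $\Prob{\sigma_\delta<T}>0$. On that event $(U_{\sigma_\delta},V_{\sigma_\delta})$ lies in the open quadrant with $U_{\sigma_\delta}-V_{\sigma_\delta}=\delta$, and the strong Markov property (combined with preservation of immersion under filtration shifts) shows that $(U_{\sigma_\delta+s},V_{\sigma_\delta+s})_{s\geq0}$ is again an immersed coupling of two Brownian motions starting from $(U_{\sigma_\delta},V_{\sigma_\delta})$. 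The original hypothesis $T_U=T_V$ restricts on $\{\sigma_\delta<T\}$ to $T_U-\sigma_\delta=T_V-\sigma_\delta$, so this restarted coupling is itself successful; applying Step~A conditionally on $\mathcal{F}_{\sigma_\delta}$ then forces $U_{\sigma_\delta}=V_{\sigma_\delta}$, contradicting $U_{\sigma_\delta}-V_{\sigma_\delta}=\delta>0$. Hence $\Prob{\sigma_\delta<T}=0$, so $U-V<\delta$ on $[0,T]$ almost surely; letting $\delta\downarrow 0$ and running the same argument on $V-U$ gives $U\equiv V$ on $[0,T]$. In the representation \eqref{eq:folklore} this forces $J=+1$ and $K=0$ almost everywhere on $[0,T]$, i.e.\ the coupling is synchronized.

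The only point requiring care is verifying that immersion is preserved under the strong-Markov restart at $\sigma_\delta$: this is a standard consequence of the martingale characterisation of immersion applied to the shifted filtration $s\mapsto\mathcal{F}_{\sigma_\delta+s}$, and is the main (modest) technical obstacle in executing the plan.
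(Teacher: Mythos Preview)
Your argument is correct but follows a genuinely different path from the paper's. The paper constructs an explicit value function \(\Phi(u,v)=\min\{u,v\}/\tfrac12(u+v)\) (the probability, under reflection coupling, that \((U,V)\) reaches the diagonal before the boundary of the quadrant) and checks via It\^o calculus that for any immersed control \(J\in[-1,1]\) one has, off the diagonal, \(\operatorname{Drift}\,\d\Phi(U,V)=-2(U+V)^{-3}|U-V|(1+J)\,\d t\le 0\), with equality only when \(J=-1\). Thus \(\Phi(U,V)\) is a supermartingale for every immersed coupling and a martingale only for the reflect-then-synchronize strategy; this identifies \(\Phi(U_0,V_0)\) as the \emph{maximum} of \(\Prob{(U_T,V_T)=(0,0)}\) over all such couplings, and the lemma follows because \(\Phi(U_0,V_0)=1\) iff \(U_0=V_0\). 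Your approach instead exploits a rigidity of the one-sided stable law---the scale parameter of the first-passage distribution determines the starting level---so \(T_U=T_V\) a.s.\ forces \(U_0=V_0\), and a strong-Markov restart propagates this identity to all times before \(T\). This is slick and avoids any drift computation. The paper's supermartingale route, by contrast, yields quantitative information (the optimal probability from \emph{every} starting point, not merely the characterisation of when it equals \(1\)) and is closer in spirit to the value-function machinery deployed later in the proof of Theorem~\ref{thm:immersed-optimality}.
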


\begin{pf}
Using the formalism of It\^{o} \cite{Ito-1975} (see
also Ikeda and Watanabe \cite{IkedaWatanabe-1981}, Chapter
III.1, and the
development in Kendall \cite{Kendall-2001b}) and the
representation of immersed Brownian couplings given in \eqref{eq:folklore},
the general law of $(U,V)$ under an immersed coupling produces $\d
U^2=\d V^2=\d t$, $\operatorname{Drift}\d U=\operatorname{Drift}\d
V=0$,
and $\d U \,\d V=J\,\d t$ for an arbitrary adapted integrand $J\in
[-1,1]$ which can be viewed as the control
for the stochastic control problem of maximizing the objective function
$\Prob[(U_T,V_T)=(0,0)]$.

Without loss of generality, we may suppose that $U_0\geq V_0>0$.
Note that under reflection coupling ($J=-1)$ the probability of $
(U,V)$ hitting the diagonal $\{(u,v)\dvt u=v\}$ before time $T$ is
given by
\[
\Phi(U_0,V_0) = \frac{V_0}{\sklfrac{1}2(U_0+V_0)} .
\]
We extend the definition of $\Phi$ to the case $V_0\geq U_0>0$ by
setting $\Phi(U_0,V_0)=\Phi(V_0,U_0)$, so that
\[
\Phi(U,V) = \min \biggl\{\frac{U}{\sklfrac{1}2(U+V)}, \frac{V}{\sklfrac{1}2(U+V)} \biggr\} .
\]

An application of It\^o calculus shows that if $U>V>0$ then, under a
general control $J\in[-1,1]$,
\[
\operatorname{Drift}\d\Phi(U,V) = -\frac{2}{(U+V)^3}(U-V) (1+J)\,\d t ,
\]
and this is non-positive, and vanishes only when $J=-1$.
A similar result holds for $V>\linebreak[4] U>0$. On the other hand, if $
U_0=V_0>0$ and $J=+1$
then $(U,V)$ stays on the diagonal, so that $\Phi(U,V)$ then
remains constant.
An argument using the It\^o--Tanaka formula for semimartingales thus
shows that
$\Phi(U,V)$ is a supermartingale for all immersed couplings of $U$
and $V$, and becomes a martingale only under the strategy
``use reflection coupling till $(U,V)$ hits the diagonal or the
boundary, then use synchronized coupling till $(U,V)$ hits the boundary''.
It follows that $\Phi(U_0,V_0)$ is the maximum of $\Prob
[(U_T,V_T)=(0,0)]$ over all immersed couplings, and is attained only
by using this strategy.
The lemma follows.
\end{pf}

As a consequence of the lemma, we can prove the optimality of the
reflection/synchroni\-zed coupling
in the special case when $B_0=\widetilde{B}_0$.
This allows us to restrict attention to immersed couplings which
preserve the ordering of $B$ and $\widetilde{B}$.

\begin{lem}\label{lem:monotonicity}
Consider the reflection/synchronized coupling (Example~\ref
{coupling:reflection/synchronized})
for the special case $B_0=\widetilde{B}_0$.
This is the only optimal coupling amongst all immersed couplings
started with $B_0=\widetilde{B}_0$, and so
(since the reflection stage succeeds immediately)
the uniquely optimal way to proceed
is to cease immediately if $S_0=\widetilde{S}_0$, and
otherwise to conduct a synchronized coupling of $B$ and $\widetilde
{B}$ until $B\equiv\widetilde{B}$ hits $S_0\vee\widetilde{S}_0$.
\end{lem}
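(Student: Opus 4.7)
The plan is to reduce the optimality statement to a hitting-time lower bound which depends only on $\widetilde{B}$, and therefore has a law that is fixed across all immersed couplings.

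Without loss of generality take $S_0\geq\widetilde{S}_0$. In the subcase $S_0=\widetilde{S}_0$ the initial pairs $(B_0,S_0)$ and $(\widetilde{B}_0,\widetilde{S}_0)$ already coincide; since synchronizing $B$ and $\widetilde{B}$ maintains both $B\equiv\widetilde{B}$ and $S\equiv\widetilde{S}$, ceasing immediately yields $T_{\text{couple}}=0$ and there is nothing to improve on. So suppose $S_0>\widetilde{S}_0$. At any successful coupling time one has $\widetilde{S}_{T_{\text{couple}}}=S_{T_{\text{couple}}}\geq S_0>\widetilde{S}_0$; as $\widetilde{S}_t=\widetilde{S}_0\vee\sup_{s\leq t}\widetilde{B}_s$, this forces $\widetilde{B}$ to have attained level $S_0$ by $T_{\text{couple}}$. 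Setting $\tau_{\widetilde{B}}(S_0)=\inf\{t\geq 0:\widetilde{B}_t=S_0\}$, we conclude
\[
T_{\text{couple}}\;\geq\;\tau_{\widetilde{B}}(S_0)\qquad\text{a.s.,}
\]
for every immersed coupling.

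The second step is to note that in every immersed coupling $\widetilde{B}$ is by hypothesis a Brownian motion started at the fixed value $B_0$, so the law of $\tau_{\widetilde{B}}(S_0)$ is identical across all immersed couplings. Under synchronization $B\equiv\widetilde{B}$, so $T_{\text{couple}}=\tau_{\widetilde{B}}(S_0)$ a.s.\ and the lower bound is attained; hence the synchronized coupling's $T_{\text{couple}}$ is stochastically minimal among all immersed couplings, which is exactly the optimality asserted.

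For the uniqueness clause, attaining equality demands $T_{\text{couple}}=\tau_{\widetilde{B}}(S_0)$ a.s., and coupling requires $B_{T_{\text{couple}}}=\widetilde{B}_{T_{\text{couple}}}=S_0$, so $B_{\tau_{\widetilde{B}}(S_0)}=S_0$ a.s. Using the one-dimensional version of \eqref{eq:folklore}, $\d\widetilde{B}=J\,\d B+K\,\d C$ with $J^2+K^2=1$, one has $\langle B-\widetilde{B}\rangle_t=\int_0^t(2-2J_s)\,\d s$. Any departure from synchronization ($J<1$ on a predictable set of positive expected Lebesgue measure inside $[0,\tau_{\widetilde{B}}(S_0)]$) makes $B-\widetilde{B}$ a nontrivial continuous local martingale there, and a conditional-support argument relative to the path of $\widetilde{B}$ then forces $B_{\tau_{\widetilde{B}}(S_0)}\neq S_0$ with positive probability, a contradiction. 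The main obstacle is precisely this last step: ruling out ingeniously constructed short non-synchronized episodes that might re-align $B$ with $\widetilde{B}$ exactly at the moment $\widetilde{B}$ first hits $S_0$. I would formalize it by conditioning on the natural filtration of $\widetilde{B}$ up to $\tau_{\widetilde{B}}(S_0)$ and exploiting the non-degeneracy of $\langle B-\widetilde{B}\rangle$ to show that the conditional law of $B_{\tau_{\widetilde{B}}(S_0)}$ given $\widetilde{B}$ has a diffuse component that spreads mass away from $S_0$.
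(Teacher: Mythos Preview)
Your optimality argument (the first two steps) is correct and essentially identical to the paper's: both observe that any successful coupling forces $\widetilde{S}$ to reach $S_0$, hence $T_{\text{couple}}\geq \tau_{\widetilde{B}}(S_0)$ almost surely, while the law of $\tau_{\widetilde{B}}(S_0)$ is the same across all immersed couplings and is attained by synchronization.

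The uniqueness argument, however, has a genuine gap that your ``conditional-support'' sketch does not close. The difficulty you flag is real: the control $J$ is predictable in the \emph{joint} filtration and may depend on both $B$ and $\widetilde{B}$ (and on the auxiliary $C$), so conditioning on the path of $\widetilde{B}$ does not leave $B-\widetilde{B}$ as a Brownian-like object with a transparently diffuse terminal law. Nothing in your quadratic-variation computation prevents an adversary from steering $D=B-\widetilde{B}$ away from $0$ and then back to $0$ at a stopping time of its own choosing; the obstruction must come from the specific fact that the target time is $\tau_{\widetilde{B}}(S_0)$ \emph{together with} the additional constraint $S_{\tau_{\widetilde B}(S_0)}=S_0$ (equivalently $\tau_B(S_0)=\tau_{\widetilde B}(S_0)$), which you do not exploit.

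The paper handles uniqueness by a different and cleaner device: it invokes the preceding Lemma~\ref{lem:quadrant-coupling}, applied to $U=S_0-B$ and $V=S_0-\widetilde{B}$. That lemma constructs an explicit bounded function $\Phi(u,v)=2\min(u,v)/(u+v)$ which is a supermartingale under every immersed coupling and a martingale only under synchronization; consequently, unless $J\equiv 1$, there is positive probability that $B$ and $\widetilde{B}$ do \emph{not} first reach $S_0$ simultaneously. A short case analysis (whether $B$ reaches $S_0$ before or after $\widetilde{T}=\tau_{\widetilde B}(S_0)$) then shows that coupling is strictly delayed with positive probability. If you want to complete your route without citing Lemma~\ref{lem:quadrant-coupling}, you would effectively need to reprove it; the supermartingale $\Phi$ is the missing ingredient.
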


\begin{pf}
If $S_0=\widetilde{S}_0$, then coupling succeeds immediately and
there is nothing to prove.
Suppose without loss of generality that $S_0>\widetilde{S}_0$.
Coupling cannot succeed earlier than the first time
$\widetilde{T}$ at which $\widetilde{B}$ hits $S_0$,
and if we employ synchronized coupling then coupling will succeed at
this hitting time.

This shows that synchronized coupling is optimal, but we require strict
optimality. Consider a second coupling which does not employ
synchronized coupling throughout.
It then follows that there must be a moment, \emph{before} time $
\widetilde{T}$,
at which either $B>\widetilde{B}$ or $\widetilde{B}>B$.
We can apply Lemma~\ref{lem:quadrant-coupling} to $U=S_0-B$ and $
V=S_0-\widetilde{B}$; it follows that if synchronized coupling is not
employed right up to time $\widetilde{T}$,
then there is a positive probability that one of two possible cases has
occurred:
either $B$ has already hit $S_0$ by time $\widetilde{T}$, or $
B$ has not yet hit $S_0$ by time $\widetilde{T}$.
In the first case, the properties of Brownian motion $B$ show that
almost surely $S_{\widetilde{T}}>S_0$, and so successful coupling
must occur after $\widetilde{B}$ travels from $S_0$ to $
S_{\widetilde{T}}$.
In the second case, successful coupling must wait at least until $
\widetilde{B}$ and $B$ meet after time $\widetilde{T}$.

It follows that, for any coupling other than synchronized coupling, (a)
the coupling time can be no less than $\widetilde{T}$, (b) there is
a positive chance of it being
strictly greater than $\widetilde{T}$. This establishes the required
stochastic domination (since $\widetilde{T}$ is the hitting
time of a real Brownian motion started at $B_0=\widetilde{B}_0$ and
rising to $S_0>S_0\vee\widetilde{S}_0$) and so
the lemma follows.
\end{pf}

In passing, we are now able to explain the reason why it is appropriate
to describe as\vspace*{1pt} singular
the case when $B_0, \widetilde{B}_0 < S_0=\widetilde{S}_0$.
In this case it is possible for full coupling of $(B,S)$ with $
(\widetilde{B},\widetilde{S})$ to succeed as soon as $B$ first
meets $\widetilde{B}$, so long as
$B$ and $\widetilde{B}$ do not hit $S_0=\widetilde{S}_0$ (as
noted in Section~\ref{sec:algorithm}, this need not imply
success of the coupling of $X$ with $\widetilde{X}$ at that time).
The next lemma shows that
if $S_0\neq\widetilde{S}_0$ then this early success cannot occur.

%
\begin{lem}\label{lem:singular}
Suppose $S_0\neq\widetilde{S}_0$.
Then an optimal immersed coupling of $(B,S)$ and $(\widetilde
{B},\widetilde{S})$
succeeds exactly at the first time when $B$, $S$, $\widetilde{B}$, and $\widetilde{S}$
simultaneously coincide.
\end{lem}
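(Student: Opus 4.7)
Without loss of generality assume $S_0 > \widetilde{S}_0$, and let $T^* := \inf\{t : B_t = S_t = \widetilde{B}_t = \widetilde{S}_t\}$. My plan is to prove $T_\text{couple} = T^*$ for an optimal immersed coupling by combining a pathwise structural observation at the first meeting time of the two running suprema with a reduction to Lemma \ref{lem:monotonicity} via the strong Markov property.

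The structural observation concerns $\rho := \inf\{t : S_t = \widetilde{S}_t\}$: trivially $T_\text{couple} \geq \rho$ (successful coupling demands $S = \widetilde{S}$), and I would first show that at $\rho$ one automatically has $\widetilde{B}_\rho = \widetilde{S}_\rho = S_\rho$. Indeed, $S$ and $\widetilde{S}$ are continuous non-decreasing processes with $S_0 > \widetilde{S}_0$, so by continuity $\widetilde{S} < S$ throughout $[0, \rho)$. If the supremum $\widetilde{S}_\rho = \max(\widetilde{S}_0, \sup_{u \leq \rho}\widetilde{B}_u)$ were attained at some $u^* < \rho$, then $\widetilde{S}$ would be constant on $[u^*, \rho]$, and the chain $\widetilde{S}_\rho = \widetilde{S}_{u^*} < S_{u^*} \leq S_\rho$ would contradict $S_\rho = \widetilde{S}_\rho$. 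Hence $\widetilde{B}_\rho = \widetilde{S}_\rho = S_\rho$: three of the four processes coincide at $\rho$, with the fourth $B_\rho \leq S_\rho$.

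To conclude, I would apply Lemma \ref{lem:monotonicity} at the first meeting time $T_1 := \inf\{t : B_t = \widetilde{B}_t\}$ of $B$ and $\widetilde{B}$. Restarted from the Markov state $(B_{T_1}, S_{T_1}, B_{T_1}, \widetilde{S}_{T_1})$, if $S_{T_1} \neq \widetilde{S}_{T_1}$ then Lemma \ref{lem:monotonicity} identifies the uniquely optimal continuation as synchronized coupling of $B \equiv \widetilde{B}$ until this common trajectory hits $S_{T_1} \vee \widetilde{S}_{T_1}$; at that hitting time all four processes coincide, and one checks that this equals $T^*$ (using the structural observation applied after $T_1$). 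What remains --- and constitutes the main obstacle --- is to exclude, for optimal couplings, the premature-success configuration $S_{T_1} = \widetilde{S}_{T_1}$ with $B_{T_1} < S_{T_1}$ (which is consistent with the structural observation but would violate the lemma's conclusion). I would address this by a Lyapunov-style supermartingale argument on the four-dimensional state space $(B, S, \widetilde{B}, \widetilde{S})$ mirroring the construction of $\Phi(U, V)$ in Lemma \ref{lem:quadrant-coupling}: one constructs a function whose drift is non-positive under every immersed coupling and vanishes only under reflection/synchronized dynamics, so that any coupling permitting $S_{T_1} = \widetilde{S}_{T_1}$ is strictly dominated. The It\^o--Tanaka calculus at the singular diagonals $B = S$ and $\widetilde{B} = \widetilde{S}$, where the local-time terms appear, is where I expect the real technical work to lie.
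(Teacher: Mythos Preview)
Your structural observation at $\rho=\inf\{t:S_t=\widetilde S_t\}$ is correct and is precisely the mechanism the paper also exploits. The divergence begins at your treatment of the ``obstacle'' $S_{T_1}=\widetilde S_{T_1}$ with $B_{T_1}<S_{T_1}$: you propose to build a new Lyapunov function on the four-dimensional state space and carry out an It\^o--Tanaka analysis at the singular diagonals. That programme may be feasible, but it is unnecessary and substantially harder than what the paper actually does.

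The key step you are missing is that Lemma~\ref{lem:monotonicity} should be invoked not merely \emph{at} $T_1$ but at \emph{every} instant where $B$ and $\widetilde B$ coincide: if the coupling is optimal and $\widetilde B_0\le B_0$, then the first time they meet the uniquely optimal continuation is synchronisation, so they stay equal thereafter; hence the pathwise ordering $\widetilde B\le B$ persists for all time under any optimal coupling. Once you have this ordering, your own structural observation finishes the job almost immediately in the sub-case $\widetilde S_0<S_0$: at $\rho$ you have $\widetilde B_\rho=\widetilde S_\rho=S_\rho$, and the ordering gives $\widetilde B_\rho\le B_\rho\le S_\rho$, forcing $B_\rho=S_\rho$ as well, so all four coincide at $\rho=T_{\text{couple}}=T^*$. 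The cross-ordered sub-case ($\widetilde B_0<B_0$ but $S_0<\widetilde S_0$) needs one short restart: at $\rho$ one has $B_\rho=S_\rho=\widetilde S_\rho$; either $\widetilde B_\rho=B_\rho$ and we are done, or $\widetilde B_\rho<B_\rho$ and then almost surely the configuration immediately becomes $\widetilde B<\widetilde S<S$ (since $B$ is at its supremum and so $S$ increases at once), whereupon the aligned sub-case applies. No new supermartingale is required.

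In short: your ``main obstacle'' dissolves once you extract from Lemma~\ref{lem:monotonicity} the persistent ordering $\widetilde B\le B$, rather than only its conclusion about the post-$T_1$ continuation. The paper's argument is a short case analysis leaning on this ordering; the four-dimensional Lyapunov construction you sketch is not needed.
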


%

\begin{pf}
Consider first the case $B_0=\widetilde{B}_0$. As shown by Lemma~\ref{lem:monotonicity},
it is then the case that the only optimal immersed coupling is provided
by synchronized coupling until $B=\widetilde{B}$
first hits $S_0\vee\widetilde{S}_0$, and the characterization of
coupling by simultaneous coincidence is immediate.

Consider the case $\widetilde{B}_0<B_0$ (the case of $\widetilde
{B}_0<B_0$ is entirely similar).
It is a consequence of Lemma~\ref{lem:monotonicity} that optimality of
the immersed coupling
implies that the relationship $\widetilde{B}\leq B$ must persist
till full coupling is successful.

So further suppose that $\widetilde{S}_0<S_0$. In that sub-case it
follows from $\widetilde{B}\leq B$ that
the relationship $\widetilde{S}\leq S$ must persist till full
coupling is successful. Coupling cannot succeed till
$\widetilde{S}$ hits $S$, and when that happens we must have $
\widetilde{B}=\widetilde{S}$. But in this sub-case we also have\vspace*{1pt}
$\widetilde{B}\leq\widetilde{S}\leq S$ and $\widetilde{B}\leq
B\leq S$. Consequently full coupling must succeed when
$\widetilde{S}$ first hits $S$, and at that time $B$, $S$, $
\widetilde{B}$, and $\widetilde{S}$
simultaneously coincide.

Suppose on the other hand that $S_0<\widetilde{S}_0$. In that
sub-case again, full coupling cannot succeed
before $S$ hits $\widetilde{S}$, at which time it is necessary
that $B$ also hits $\widetilde{S}$.
If it is further the case that $\widetilde{B}=B$ at that time, then
full coupling succeeds in the manner prescribed by the lemma.
If on the other hand $\widetilde{B}<B$ at that time, then (by the
properties of Brownian motion) there are instants immediately after this
time at which $\widetilde{B}<\widetilde{S}<B < S$, and we can
proceed as above.
\end{pf}


We shall now show that the distribution of the coupling
time $T_{\mathrm{couple}}$ under any immersed coupling can be
dominated in the limit (as $N\to\infty$)
by the distribution of $T_{\mathrm{couple}}$ under an immersed
coupling whose predictable control $J$ is restricted to values $\pm
1$
(thus, a ``bang-bang'' control),
and moreover such that $J$ is constant
on stochastic intervals $[\tau^{(N)}_k,\tau^{(N)}_{k+1})$ defined
as follows.
For any positive even integer $N>0$, consider the one-dimensional lattice
$\mathcal{L}^{(N)}$ (depending implicitly on $B_0$, and $
\widetilde{B}_0$)
\[
\mathcal{L}^{(N)} = B_0+\frac{\widetilde{B}_0-B_0}{N}\mathbb{Z} =
\biggl\{B_0 + \frac{k}{N}(\widetilde{B}_0-B_0)
\dvt  k = 0, \pm1, \pm2, \ldots\biggr\} .
\]
We define a \emph{mesh}, a sequence of stopping times $0=\tau
^{(N)}_0<\tau^{(N)}_1<\tau^{(N)}_2<\cdots\,$, as
a sequence of ``crossing times'' for this lattice:
\[
\tau^{(N)}_{k+1} = \inf \bigl\{t>\tau^{(N)}_k
\dvt  B_t \in\mathcal {L}^{(N)}\setminus\{B_{\tau^{(N)}_k}\}
\bigr\} .
\]
Sampling using this mesh of stopping times has the effect of
discretizing the Brownian motion $B$ into a random walk with steps $
\pm\tfrac{1}{N}(\widetilde{B}_0-B_0)$.

Note, for an immersed coupling restricted to a control $J$ which is
locally constant with $J=\pm1$ on each stochastic interval $[\tau
^{(N)}_k,\tau^{(N)}_{k+1})$ of the mesh:
\begin{enumerate}[3.]
\item[1.] both $B$ and $\widetilde{B}$, when sampled at times $0=\tau
^{(N)}_0<\tau^{(N)}_1<\tau^{(N)}_2<\cdots\,$, belong to the lattice $
\mathcal{L}^{(N)}$,
since $\widetilde{B}$ is obtained from $B$ using a predictable
control $J$ formed from synchronizations and reflections and which
alters only when $B$ belongs to the lattice;
\item[2.] because $N$ is even, $T_{\mathrm{couple}}$ belongs to the
set $\{\tau^{(N)}_0, \tau^{(N)}_1, \tau^{(N)}_2, \ldots\}$;
\item[3.] finally, our candidate for optimality, the reflection/synchronized coupling (Example~\ref{coupling:reflection/synchronized}),
can itself be viewed as one of these couplings, since the control
changes from $+1$ to $-1$ exactly at one of the stopping times in
the mesh.
(This is the reason why it is convenient to work with meshes of
stopping times, rather than decompositions of the time axis into
disjoint dyadic intervals.)
\end{enumerate}

We can now summarize and prove a result stating that an optimal
immersed coupling can be approximated in distribution by appropriately
chosen ``bang-bang'' controls of the above form.
The proof is related to the method of proof of
\'{E}mery \cite{Emery-2005}, Proposition~2; however
here we need the control $
J$ to have the ``bang-bang'' property rather
than simply to be locally constant, and to be composed of stopping
times drawn from a mesh of stopping times as specified above.

%
\begin{lem}\label{lem:approx}
For any fixed $t>0$, any optimal immersed coupling of $(B,S)$ and $
(\widetilde{B},\widetilde{S})$ can be approximated weakly over $
[0,t]$ (when viewed as a
probability distribution on the metric space of $4$-dimensional
continuous trajectories, equipped with the sup-norm) by
``bang-bang'' immersed couplings for which the
control $J$ takes values $\pm1$ only, and only changes at hitting
times belonging to some mesh.
\end{lem}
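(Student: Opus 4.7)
The plan is to construct, for each even $N$, an explicit bang-bang approximation and then pass to the weak limit as $N\to\infty$. Using the one-dimensional form of the SDE representation \eqref{eq:folklore}, the optimal immersed coupling can be written $\d\widetilde{B}=J\,\d B+K\,\d C$, where $C$ is a Brownian motion independent of $B$ (defined on a suitably enlarged probability space) and $J,K$ are predictable scalar processes with $J^2+K^2=1$. The goal is to replace this by a control $J^{(N)}\in\{+1,-1\}$ which is constant on every mesh interval $[\tau^{(N)}_k,\tau^{(N)}_{k+1})$, such that the joint process $(B,\widetilde{B}^{(N)})$ converges in law on $C([0,t];\mathbb{R}^2)$ to $(B,\widetilde{B})$.

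The construction proceeds in two stages. First, for each mesh interval $[\tau^{(N)}_k,\tau^{(N)}_{k+1})$ compute the time-average
\begin{equation*}
\alpha^{(N)}_k \quad=\quad \frac{1}{\tau^{(N)}_{k+1}-\tau^{(N)}_k}\int_{\tau^{(N)}_k}^{\tau^{(N)}_{k+1}} J_s\,\d s \quad\in\quad [-1,1].
\end{equation*}
Second, introduce an independent sequence of uniform random variables $\{U_k\}$ (augmenting the filtration if necessary) and let $\varepsilon^{(N)}_k=+1$ when $U_k\leq(1+\alpha^{(N)}_k)/2$ and $\varepsilon^{(N)}_k=-1$ otherwise, so that $\mathbb{E}[\varepsilon^{(N)}_k\mid\mathcal{F}_{\tau^{(N)}_{k+1}}]=\alpha^{(N)}_k$. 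Setting $J^{(N)}\equiv\varepsilon^{(N)}_k$ on $[\tau^{(N)}_k,\tau^{(N)}_{k+1})$ and $\d\widetilde{B}^{(N)}=J^{(N)}\,\d B$ produces an immersed coupling whose second coordinate is Brownian motion by L\'evy's characterization (since $(J^{(N)})^2\equiv1$), with the control changing only at stopping times of the mesh, exactly as required.

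For the convergence step, tightness on $C([0,t];\mathbb{R}^2)$ is immediate since both coordinates of $(B,\widetilde{B}^{(N)})$ are Brownian motions. The quadratic covariations $[B,\widetilde{B}^{(N)}]_t = \int_0^t J^{(N)}_s\,\d s$ differ from $\int_0^t J_s\,\d s$ by $\sum_k(\varepsilon^{(N)}_k-\alpha^{(N)}_k)(\tau^{(N)}_{k+1}-\tau^{(N)}_k)$, a sum of conditionally centred terms whose $L^2$-norm is bounded by $\bigl(\max_k(\tau^{(N)}_{k+1}-\tau^{(N)}_k)\cdot t\bigr)^{1/2}$. Since the lattice spacing is $O(1/N)$, standard Brownian hitting-time estimates give $\max_k(\tau^{(N)}_{k+1}-\tau^{(N)}_k)\to0$ in probability, so $[B,\widetilde{B}^{(N)}]_t\to[B,\widetilde{B}]_t$ in probability. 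A martingale-problem identification argument in the style of \citet[Proposition 2]{Emery-2005} then shows that any subsequential weak limit must agree with the original coupling, after which the continuous mapping theorem applied to the running-supremum functional lifts weak convergence from $(B,\widetilde{B}^{(N)})$ to $(B,S,\widetilde{B}^{(N)},\widetilde{S}^{(N)})$. The principal obstacle is identification of the subsequential limits: although the bang-bang controls match $J$ in averaged quadratic covariation, confirming that the \emph{joint} law of the limit coincides with the original coupling requires uniqueness in the relevant martingale problem, for which predictability of $J$ and the filtration-enlargement structure must be handled with some care.
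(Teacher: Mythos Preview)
Your construction has a genuine adaptedness gap that breaks the immersed property. You define
\[
\alpha^{(N)}_k \;=\; \frac{1}{\tau^{(N)}_{k+1}-\tau^{(N)}_k}\int_{\tau^{(N)}_k}^{\tau^{(N)}_{k+1}} J_s\,\d s
\]
and then set \(J^{(N)}\equiv\varepsilon^{(N)}_k\) on the interval \([\tau^{(N)}_k,\tau^{(N)}_{k+1})\), where \(\varepsilon^{(N)}_k\) depends on \(\alpha^{(N)}_k\). But \(\alpha^{(N)}_k\) is only \(\mathcal{F}_{\tau^{(N)}_{k+1}}\)-measurable (you acknowledge this yourself in writing \(\mathbb{E}[\varepsilon^{(N)}_k\mid\mathcal{F}_{\tau^{(N)}_{k+1}}]=\alpha^{(N)}_k\)), so your control looks \emph{forward} across the very interval on which it is being used. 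Consequently \(J^{(N)}\) is not predictable in any filtration in which \(B\) remains a Brownian motion, the It\^o integral \(\int J^{(N)}\,\d B\) is not a martingale in that filtration, and \((B,\widetilde{B}^{(N)})\) is not an immersed coupling. The lemma explicitly requires the approximants to be immersed couplings, so this is fatal as stated. Shifting indices (using \(\alpha^{(N)}_{k-1}\) on \([\tau^{(N)}_k,\tau^{(N)}_{k+1})\)) would restore predictability but then your conditional-centering calculation for the covariation error no longer applies directly, and you would need a separate argument that the one-step lag is asymptotically negligible.

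The paper avoids this difficulty by a two-scale, fully predictable construction. First \(J\) is approximated in \(L^2\) by a \emph{backward-looking} continuous integrand and then by a piecewise-constant \(J^{[c]}\in[-1,1]\) on a mesh of scale \(N\); this step is pathwise via Doob's inequality. Then, on a strictly finer mesh \(M=2^rN\), a bang-bang control \(J^{[bb]}\) is defined by the deterministic tracking rule
\[
J^{[bb]}_{\tau^{(M)}_k}\;=\;\operatorname{sgn}\!\left(\int_0^{\tau^{(M)}_k}J^{[c]}\,\d u-\int_0^{\tau^{(M)}_k}J^{[bb]}\,\d u\right),
\]
which is manifestly \(\mathcal{F}_{\tau^{(M)}_k}\)-measurable and needs no auxiliary randomisation. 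This gives a uniform bound \(\bigl|\int_0^s(J^{[c]}-J^{[bb]})\,\d u\bigr|\le 2\max_k(\tau^{(M)}_{k+1}-\tau^{(M)}_k)\), after which tightness plus identification of the bracket yields the weak limit exactly as you outline. Your randomised-sign idea is a natural relaxed-control heuristic, but the deterministic tracking is what makes the approximants genuinely immersed.
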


Note that the lemma does \emph{not} assert that the ``bang-bang''
couplings are successful!

\begin{pf*}{Proof of Lemma~\ref{lem:approx}}
Consider a general immersed coupling determined by $\widetilde
{B}=\widetilde{B}_0+\int J \,\d B$ and
subject to the constraint that the coupling is synchronized once $B$
and $\widetilde{B}$ have met.
(By Lemma~\ref{lem:monotonicity}, all optimal immersed couplings must
be of this form.)
Since $|J|\leq1$, for each $t>0$ we have $\Expect[\int_0^t J^2
\,\d s]<\infty$, and moreover
for each $\varepsilon>0$ we may find
continuous predictable $f$ with $\Expect[\int_0^t |f-J|^2 \,\d
s]<\varepsilon^2/4$ (for example, $f_t=\tfrac{2}{\delta^2}\int_{t-\delta}^t (t-s) J_s\,\d s$
for sufficiently small $\delta$). It then follows that for
sufficiently large $N$ we may approximate $f$ in $L^2$
by \emph{piece-wise constant} $J^{[c]}$ such that $J^{[c]}\in
[-1,1]$ is predictably constant on each dyadic interval
$[\tau^{(N)}_k,\tau^{(N)}_{k+1})$ of the mesh,
and $\Expect[\int_0^t |f-J^{[c]}|^2 \,\d s]<\varepsilon^2/4$, hence
\[
\Expect\biggl[\int_0^t \bigl|J-J^{[c]}\bigr|^2
\,\d s\biggr]<\varepsilon^2 .
\]
Doob's submartingale inequality then implies that we can control
\[
\sup_{s\leq t} \biggl\{\biggl\llvert \int_0^s
J\,\d B - \int_0^s J^{[c]}\,\d B\biggr
\rrvert \biggr\} ,
\]
so that $\widetilde{B}_0+\int J^{[c]}\,\d B$ is a good path-wise
approximation to
$\widetilde{B}=\widetilde{B}_0+\int J\,\d B$.\vspace*{1pt}

While $J^{[c]}$ is piecewise-constant on stochastic intervals related
to the mesh, it does not take values in $\{\pm1\}$.
We need an approximation based on a ``bang-bang'' control $J^{[bb]}$,
which is constrained by $J^{[bb]}\in\{\pm1\}$ as well
as by the requirement that $J^{[bb]}$ is predictably constant on
stochastic intervals
$[\tau^{(M)}_k,\tau^{(M)}_{k+1})$ which now must be formed on a new
mesh, defined for some still larger even integer $M=2^r N$, for
an integer $r>0$. Given $M>N$, we define
$J^{[bb];(M)}$ to ``track'' $J^{[c]}$ in the following co-adapted way:
%
\begin{equation}
\label{eqn:tracking} J^{[bb];(M)}_{\tau^{(M)}_k} = \cases{ +1 & \quad \mbox{if} $\displaystyle \int
_0^{\tau^{(M)}_k} J^{[bb];(M)}\, \d u \leq\displaystyle \int
_0^{\tau^{(M)}_k} J^{[c]} \,\d u $,
\cr
-1 &\quad
\mbox{if} $\displaystyle \int_0^{\tau^{(M)}_k} J^{[bb];(M)} \,\d u >
\displaystyle \int_0^{\tau^{(M)}_k} J^{[c]} \,\d u$. }
\end{equation}
Since $|J|\leq1$, it follows that we have the following bound for $
s\in[0,t]$:
%
\begin{equation}
\label{eq:integrand-bound} \biggl\llvert \int_0^s
J^{[c]} \,\d u - \int_0^s
J^{[bb];(M)} \,\d u\biggr\rrvert \leq 2 \sup \bigl\{ \bigl(
\tau^{(M)}_{k+1}\wedge t\bigr) - \bigl(\tau^{(M)}_{k}
\wedge t\bigr) \dvt  k=1, 2, \ldots \bigr\} ,
\end{equation}
converging almost surely to zero as $2^r=M/N\to\infty$.

Consider the sequence of two-dimensional processes $\{(B, \widetilde
{B}_0+\int J^{[bb];(M)}\,\d B) \dvt  M=2^r N\}$,
defined on the time-range $[0,t]$.
The one-dimensional coordinate processes being Brownian motions, it
follows that this sequence is tight.
Any convergent subsequence converges to a limit for which the
one-dimensional coordinate processes are Brownian motions;
moreover, using \eqref{eq:integrand-bound},
we may deduce that in the limit the product of the pair of
one-dimensional coordinate processes is equal to the sum of a martingale
and the integral $\int_0^s J^{[c]}\, \d u$. Hence by semimartingale
It\^o calculus the limit has the law of
$(B, \widetilde{B}_0+\int J^{[c]}\,\d B)$, no matter what convergent
subsequence is chosen, and therefore by
the theory of weak convergence we may deduce that the sequence of
random paths
$(B, \widetilde{B}_0+\int J^{[bb];(M)}\,\d B)$ converges weakly to
this limit.

It follows that we can choose a sequence of ``bang-bang'' controls $
J^{(n)}$, constant on
appropriate meshes
$\{[\tau^{(M_n)}_k,\tau^{(M_n)}_{k+1}) \dvt  k=1,2,\ldots\}$ (with $
M_n\to\infty$),
such that $(B, \widetilde{B}_0+\int J^{(n)}\,\d B)$
converges weakly (using supremum norm over the time interval $[0,t]$)
to the immersed coupling $(B, \widetilde{B})$ which was originally
under consideration.
\end{pf*}

We can now argue for optimality of the reflection/synchronized
coupling in the general\vspace*{1pt} non-singular case ($S_0\neq\widetilde{S}_0$).
We need only consider the case
when $B_0\neq\widetilde{B}_0$, since Lemma~\ref{lem:monotonicity}
covers the case of $B_0=\widetilde{B}_0$;
indeed it suffices to
consider only those immersed couplings which are constrained to be
synchronized couplings once $B$ and $\widetilde{B}$ have met.
Employing the terminology of Section~\ref{sec:algorithm}, we set $
T_1=\inf\{t:B_t=\widetilde{B}_t\}$. Thus, we need consider only
those immersed couplings for which $J_t=1$
once $t>T_1$.

%
\begin{thm}\label{thm:immersed-optimality}
Suppose that $B_0\neq\widetilde{B}_0$ and $S_0\neq\widetilde
{S}_0$.
The reflection/synchronized coupling (Example~\ref
{coupling:reflection/synchronized}) is optimal amongst
all immersed couplings of Brownian motion together with local time.
\end{thm}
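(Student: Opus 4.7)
The plan is to combine the three preparatory lemmas with a supermartingale verification argument of the same flavour as the proof of Lemma \ref{lem:quadrant-coupling}. Fix \(t>0\) and assume without loss of generality that \(B_0>\widetilde{B}_0\). By Lemma \ref{lem:approx} it suffices to prove stochastic domination of the coupling time for the class of ``bang-bang'' immersed couplings in which the control \(J\in\{\pm 1\}\) is piecewise constant on mesh stopping times; the conclusion then transfers to arbitrary immersed couplings by weak convergence of the coupled four-tuple \((B,S,\widetilde{B},\widetilde{S})\) on \([0,t]\). By Lemma \ref{lem:monotonicity} any optimal such coupling must pick \(J\equiv +1\) after the meeting time \(T_1=\inf\{u:B_u=\widetilde{B}_u\}\), so the ordering \(\widetilde{B}\leq B\) is preserved throughout, and by Lemma \ref{lem:singular} coupling success coincides with the first instant at which \(B\), \(S\), \(\widetilde{B}\) and \(\widetilde{S}\) simultaneously agree. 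Only the choice of \(J\) on \([0,T_1)\) therefore remains in question.

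The next step is to introduce a value function \(V(t;b,s,\widetilde{b},\widetilde{s})\) equal to the probability of success by time \(t\) under the reflection/synchronized coupling started from a state satisfying \(\widetilde{b}\leq b\leq s\), \(\widetilde{b}\leq\widetilde{s}\). Because the reflection/synchronized time \(T^{*}_\text{couple}\) decomposes as the hitting time by \(B\) of the midpoint \(\beta=\tfrac12(b+\widetilde{b})\) followed by the hitting time of the (random) level \(S_{T_1}\vee\widetilde{s}\) from \(\beta\), the function \(V\) is expressible as an explicit convolution of Brownian first-passage densities. The goal is to show that \(V(t-u;B_u,S_u,\widetilde{B}_u,\widetilde{S}_u)\) is a supermartingale under every admissible bang-bang control, and a martingale when that control agrees with the reflection/synchronized prescription. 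Once this is proved, optional sampling at time \(t\) delivers
\[
\Prob{T_\text{couple}\leq t}\;\leq\; V(t;B_0,S_0,\widetilde{B}_0,\widetilde{S}_0) \;=\; \Prob{T^{*}_\text{couple}\leq t},
\]
and the theorem follows by passage to the weak limit through Lemma \ref{lem:approx} (noting that the coupling times converge in distribution to those of the limiting immersed coupling, so that the stochastic domination is preserved).

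The crux --- and main obstacle --- is the It\^o--Tanaka verification of the supermartingale/martingale claim. In the interior of the region \(\{\widetilde{b}<b\}\) the infinitesimal generator under control \(J\in\{\pm 1\}\) is
\[
\mathcal{L}^J V \;=\; \tfrac12\partial_{bb}V + \tfrac12\partial_{\widetilde{b}\widetilde{b}}V + J\,\partial_{b\widetilde{b}}V,
\]
supplemented by Skorokhod-type boundary drifts from \(S\) and \(\widetilde{S}\) that act on \(\{b=s\}\) and \(\{\widetilde{b}=\widetilde{s}\}\) respectively. The required HJB inequality \(\max_{J\in\{\pm 1\}}\mathcal{L}^J V\leq 0\), with equality only at \(J=-1\), reduces to the sign condition \(\partial_{b\widetilde{b}}V\geq 0\) in the interior, which should be readable from the convolution formula since the meeting density smooths and positively couples the dependence of \(V\) on \(b\) and on \(\widetilde{b}\). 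Two subtleties must be addressed: firstly, \(V\) is only continuous, not \(C^2\), across the diagonal \(\{\widetilde{b}=b\}\), forcing the use of It\^o--Tanaka in place of plain It\^o calculus; the saving grace is that the reflection/synchronized strategy switches \(J\) from \(-1\) to \(+1\) exactly on hitting this diagonal, so the local-time contribution at the diagonal vanishes under the optimal control while remaining non-negative for any other. Secondly, the boundary conditions on \(\{b=s\}\) and \(\{\widetilde{b}=\widetilde{s}\}\) must be handled by a similar local-time analysis, using the fact that \(S\) (respectively \(\widetilde{S}\)) increases only when \(B=S\) (respectively \(\widetilde{B}=\widetilde{S}\)). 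Combining the interior inequality with the correct treatment of these singular sets yields the supermartingale/martingale property and completes the proof.
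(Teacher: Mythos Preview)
Your overall architecture---reduce to bang-bang couplings via Lemma~\ref{lem:approx}, use Lemmas~\ref{lem:monotonicity} and~\ref{lem:singular} to pin down the post-\(T_1\) behaviour, then run a value-function supermartingale verification---is exactly the skeleton the paper uses. But the execution of the two critical steps diverges from the paper, and in both places your version has a genuine gap.

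First, the supermartingale verification. You propose to apply It\^o--Tanaka directly to \(V(t-u;\cdot)\) in continuous time and reduce to a sign condition on \(\partial_{b\widetilde b}V\). Two problems: the sign you claim is backwards (equality in the HJB at \(J=-1\) requires \(\partial_{b\widetilde b}V\le 0\), not \(\ge 0\)), and more importantly you offer no argument for it beyond the heuristic that ``the meeting density smooths and positively couples the dependence''. The value function is a convolution involving the joint law of the reflection-phase duration \emph{and} the running maximum \(M_1\) attained during it, followed by a further hitting time to the random level \(M_1\vee S_0\vee\widetilde S_0\); extracting the sign of the mixed partial from this is not obviously tractable, and your treatment of the singular boundaries \(\{b=s\}\), \(\{\widetilde b=\widetilde s\}\) is only gestured at. The paper sidesteps all of this: it samples \(V\) at the mesh stopping times \(\tau^{(N)}_k\) (so no smoothness of \(V\) is needed) and establishes the one-step supermartingale inequality by a direct pathwise coupling argument. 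Specifically, it builds the reflection/synchronized coupling out of two independent Brownian pieces \(B^{(r)}\) and \(B^{(s)}\), then shows by a case analysis on whether \(M^{(r)}\pm\tfrac1N(B_0-\widetilde B_0)\) exceeds \(S_0\vee\widetilde S_0\) that prepending a single synchronized mesh-step can only push the coupling time later. No derivatives of \(V\) are ever computed.

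Second, your passage to the limit. You write that ``the coupling times converge in distribution to those of the limiting immersed coupling''---but the coupling time is not a continuous functional of the path, and weak convergence of \((B^{(n)},S^{(n)},\widetilde B^{(n)},\widetilde S^{(n)})\) does \emph{not} imply convergence of \(T_\text{couple}^{(n)}\). The paper is explicit about this: it uses Lemma~\ref{lem:singular} (that at \(T_\text{couple}\) all four processes coincide) to define an approximate coupling time \(T_3^{(n)}\) as the first time the approximating quadruple lies within \(\pm\varepsilon_n\), and then \emph{modifies} the approximating coupling after \(T_3^{(n)}\) by switching to reflection/synchronized, so that the modified coupling actually succeeds at a time \(T_\text{couple}^{(n)}\) with \(T_\text{couple}^{(n)}-T_3^{(n)}\to 0\) in probability by scaling. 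Without this modification step your limiting argument does not go through.
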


\begin{pf}
As noted above, by Lemma~\ref{lem:monotonicity} we may restrict
attention to immersed couplings for which (without loss of generality)
$B\geq\widetilde{B}$, and such that $B\equiv\widetilde{B}$
after $T_1=\inf\{s\dvt  B_s=\widetilde{B}_s\}$.
Moreover, by the argument of Lemma~\ref{lem:singular}, at the coupling
time $T_{\mathrm{couple}}$ we must
have $B_{T_{\mathrm{couple}}}=S_{T_{\mathrm{couple}}}=\widetilde
{B}_{T_{\mathrm{couple}}}=\widetilde{S}_{T_{\mathrm{couple}}}$.

The first step is to use the weak approximations $(B, \widetilde
{B}_0+\int J^{(n)}\,\d B)$ of $(B, \widetilde{B})$
(as given in Lemma~\ref{lem:approx})
to build \emph{successful} immersed couplings of $(B,S)$ and $
(\widetilde{B},\widetilde{S})$ with coupling times
which are in the limit stochastically dominated by the coupling time
derived from $(B, \widetilde{B})$.
For convenience, we employ the Skorokhod representation of weak convergence;
augmenting the probability space if necessary,
we construct a copy\vspace*{1pt}
$(B^{(n)}, \widetilde{B}^{(n)}=\widetilde{B}_0+\int J^{*,(n)}\,\d
B^{(n)})$ of $(B, \widetilde{B}_0+\int J^{(n)}\,\d B)$ on the
same probability space as $(B, \widetilde{B})$ such that almost surely\vspace*{1pt}
$B^{(n)}\to B$ and $\widetilde{B}^{(n)}\to\widetilde{B}$
uniformly on the time interval $[0,t]$.
(We note in passing that this construction need not respect the
underlying filtration. The stochastic integrand
$J^{*,(n)}$ and the stochastic integral $\int J^{*,(n)}\,\d B^{(n)}$
are defined with respect to the natural
filtration of $B^{(n)}$, which need not immerse in the original filtration!)

Although the target coupling of $(B,S)$ and $(\widetilde
{B},\widetilde{S})$ couples at $T_{\mathrm{couple}}$, we should not
suppose
that $(B^{(n)},S^{(n)})$ and $(\widetilde{B}^{(n)}, \widetilde
{S}^{(n)})$ couple at this time
(using $S^{(n)}$ and $\widetilde{S}^{(n)}$ to denote the
corresponding supremum processes). However,
we can modify $(B^{(n)}, \widetilde{B}^{(n)})$ to produce a coupling
which does not succeed much later than the original coupling.

Indeed, we have restricted attention to immersed couplings such that at
the coupling time $T_{\mathrm{couple}}$ we have
$B_{T_{\mathrm{couple}}}=S_{T_{\mathrm{couple}}}=\widetilde
{B}_{T_{\mathrm{couple}}}=\widetilde{S}_{T_{\mathrm{couple}}}$.
Accordingly, we may choose a sequence $\varepsilon_n\to0$ such that
\[
\Prob\bigl[B^{(n)}_{T_{\mathrm{couple}}}, S^{(n)}_{T_{\mathrm{couple}}},
\widetilde{B}^{(n)}_{T_{\mathrm{couple}}}, \widetilde {S}^{(n)}_{T_{\mathrm{couple}}}
\mbox{ all lie within }\pm\varepsilon_n\mbox{ of each other}\bigr]
\geq1-\varepsilon_n .
\]
Accordingly, if we set
\[
T^{(n)}_3 = \inf \bigl\{s \dvt  B^{(n)}_{s},
S^{(n)}_{s}, \widetilde{B}^{(n)}_{s},
\widetilde{S}^{(n)}_{s} \mbox{ all lie within }\pm
\varepsilon_n\mbox{ of each other} \bigr\} ,
\]
then
\[
\Prob\bigl[T^{(n)}_3>t\bigr] \leq\Prob[T_{\mathrm{couple}}>t]+
\varepsilon_n .
\]

But at time $T^{(n)}_3$ we can modify the construction of $
(B^{(n)},S^{(n)})$ and $(\widetilde{B}^{(n)}, \widetilde{S}^{(n)})$
to use the reflection/synchronized coupling
(Example~\ref{coupling:reflection/synchronized}), obtaining
successful coupling at time $T^{(n)}_{\mathrm{couple}}\geq T^{(n)}_3$.
As $\varepsilon\to0$ so we can deduce by scaling that the extra
time $T^{(n)}_{\mathrm{couple}}-T^{(n)}_3$
required for success of this final coupling must tend to zero in probability.

It follows from these arguments that the infimum of the probability of
failing to couple before time $t$, for any fixed $t>0$,
\[
\Prob[T_{\mathrm{couple}} > t] ,
\]
can be approached by considering $\Prob[T_{\mathrm
{couple}}>t+\varepsilon_n]$ for suitable $\varepsilon_n\to0$ and\vspace*{2pt}
immersed couplings based on ``bang-bang'' controls $J^{[bb]}$
constrained by change only at stopping times
taken from meshes
$0=\tau^{(N)}_0<\tau^{(N)}_1<\tau^{(N)}_2<\cdots\,$, and which
become synchronous after $B$ and
$\widetilde{B}$ first meet.

Consider such an immersed coupling with control $J^{[bb]}$. For a
fixed $t>0$, we consider the following value function, defined for $
0\leq u<t$:
%
\begin{eqnarray}
\label{eqn:value-function} &&\hspace*{-15pt}V(u;b,\widetilde{b},s,\widetilde{s})\nonumber\\[-8pt]\\[-8pt]
&&\hspace*{-15pt}\quad  =
\Prob[T_{\mathrm{couple}} > t-u | B_u=b, \widetilde{B}_u=
\widetilde {b},S_u=s,\widetilde{S}_u=\widetilde{s};
\mbox{reflection/synchronized coupling}] .\nonumber
\end{eqnarray}
We are particularly interested in the discrete-time process obtained by
sampling at stopping times taken from the specified mesh, but stopping
at the terminal time $t$:
\[
\bigl\{Z_n = V\bigl(\tau^{(N)}_n \wedge t;
B_{\tau^{(N)}_n \wedge t}, \widetilde{B}_{\tau^{(N)}_n \wedge t}, S_{\tau^{(N)}_n \wedge t},
\widetilde{S}_{\tau^{(N)}_n \wedge t}\bigr) \dvt  n = 0, 1, 2, \ldots \bigr\} .
\]
It follows by definition that $V(u; B_u, \widetilde{B}_u, S_u,
\widetilde{S}_u)$ is a bounded martingale under the reflection/synchronized coupling.
Under this coupling
$Z$ is a discrete-time martingale since it is obtained from the
bounded process $\{V_{u\wedge t}\dvt u\geq0\}$ by sampling at stopping times.
We shall
show that $Z$ is a supermartingale under the coupling specified by $
J^{[bb]}$, and moreover
that the martingale property cannot hold if $J^{[bb]}=+1$ over the
initial time interval $[0,\tau^{(N)}_1)$.
Arguing inductively, this suffices to establish the theorem.

The crux of the matter is to consider the behaviour of the value
function at time zero if the initial segment of coupling is
synchronized. To this end,
we make a special construction of the reflection/synchronized
coupling referred to by the value function: we suppose two independent
Brownian motions
are employed (both begun at $0$), namely $B^{(r)}$ to drive the
reflection stage of the coupling, and $B^{(s)}$ to drive the
synchronized stage.
%
We set
\begin{itemize}
\item
$\tau^{(N;s)}_1=\inf\{t>0\dvt |B^{(s)}_t|=\frac{1}{N}(B_0-\widetilde
{B}_0)\}$,
\item
$T^{(r)}_1$ to be the time when $B^{(r)}$ first hits $-\tfrac
{1}{2}(B_0-\widetilde{B}_0)$, corresponding to the end of the
reflection stage,
\item
and $M^{(r)}=\sup\{B^{(r)}_s\dvt s\leq T^{(r)}_1\}+B_0$ to be the
maximum level achieved during the reflection stage.
\end{itemize}
Then the reflection/synchronized coupling time corresponds in law
to $T^{(*)}_3=T^{(r)}_1+\inf\{s\dvt B^{(s)}_s +\frac
{1}{2}(B_0+\widetilde{B}_0)= \max\{S_0\vee\widetilde{S}_0,M^{(r)}\}\}
$,
and so $Z_0=\Prob[T^{(*)}_3>t]$.

Now consider the effect of commencing with a session of synchronized coupling.
We consider two possible cases.
Suppose in the first case that
\[
M^{(r)} - \frac{1}{N}(B_0-\widetilde{B}_0)
\leq S_0\vee\widetilde {S}_0 .
\]
Then we can represent the initial session of synchronized coupling by
using $B^{(s)}|_{[0,\tau^{(N;s)}_1)}$,
and then replacing $B^{(s)}_t$ by $B^{(s)}_{t+\tau
^{(N;s)}_1}-B^{(s)}_{\tau^{(N;s)}_1}$. Evidently
the distribution of $T^{(*)}_3$ is unaffected by this change. If furthermore
\[
M^{(r)} + \frac{1}{N}(B_0-\widetilde{B}_0)
\leq S_0\vee\widetilde{S}_0
\]
then the reflection stage will start at time $\tau^{(N;s)}_1$ at
level $B^{(s)}_{\tau^{(N;s)}_1}$,
moreover by the end of the reflection stage the
supremum of the coupled processes will not exceed $S_0\vee\widetilde
{S}_0$, and the subsequent
synchronization stage will have to move from
$B^{(s)}_{\tau^{(N;s)}_1}+\tfrac{1}{2}(B_0+\widetilde{B}_0)$
to $S_0\vee\widetilde{S}_0$.
It follows that $T^{(*)}_3$ is still the coupling time.
If on the other hand
\[
M^{(r)} + \frac{1}{N}(B_0-\widetilde{B}_0)
> S_0\vee\widetilde{S}_0 ,
\]
then there is a possibility that the supremum of the coupled processes
\emph{will} exceed $S_0\vee\widetilde{S}_0$. However, the subsequent
synchronization stage will still have to move from
$B^{(s)}_{\tau^{(N;s)}_1}+\tfrac{1}{2}(B_0+\widetilde{B}_0)$
to $S_0\vee\widetilde{S}_0$, but may have to move even further.
Thus, the coupling time still cannot occur earlier than $T^{(*)}_3$.

Suppose in the second case that
\[
M^{(r)} - \frac{1}{N}(B_0-\widetilde{B}_0)
> S_0\vee\widetilde{S}_0 .
\]
Then we can represent the initial session of synchronized coupling by
using an independent copy $\hat B|_{[0,\hat\tau^{(N;s)}_1)}$ of $
B|_{[0,\tau^{(N;s)}_1)}$, and restarting the construction at the new
starting points\vspace*{1pt}
$B_0\pm\tfrac{1}{N}(B_0-\widetilde{B}_0)$, $\widetilde{B}_0\pm
\tfrac{1}{N}(B_0-\widetilde{B}_0)$ (same sign for each initial
increment). Regardless of the sign of the initial increment, coupling
occurs at
$\hat{\tau}^{(N;s)}_1 + T^{(*)}_3$, so is delayed relative to $
T^{(*)}_3$.

It follows from these arguments that an initial session of synchronized
coupling followed by reflection/synchronized coupling cannot increase
the probability of
successful coupling by time $t$
compared with that of reflection/synchronized coupling,
and has a positive chance of reducing it;
consequently $Z$ is a supermartingale and the martingale property for
$Z$ cannot hold if $J^{[bb]}=+1$ over the initial time interval $
[0,\tau^{(N)}_1)$.

Since $Z$ is a martingale for the reflection/synchronized coupling,
this suffices to establish the theorem.
\end{pf}

It is likely that the reflection/synchronized coupling is the \emph
{unique} optimal immersed coupling, but we do not pursue this
technicality here.

In the singular case the value function \eqref{eqn:value-function}
takes on a more complicated form, and
the above approach in this case will no longer settle
whether or not the reflection/synchronized coupling is optimal
amongst immersed couplings.
\subsection{Rate of optimal immersed coupling}
We now elicit the rate at which the reflection/synchronized coupling
occurs. This is accomplished by calculating the moment generating
function of the optimal immersed coupling time\vspace*{1pt} in the non-singular case
of $S_0\neq\widetilde{S}_0$,
supposing (without loss of generality) that $B_0>\widetilde{B}_0$.

%
\begin{thm}\label{thm:rates}
Let $T_{\mathrm{couple}}$ be the coupling time for Brownian motion
together with local time, equivalently for Brownian motion $B$
together with its supremum $S$,
using the reflection/synchronized coupling described above.
Let $\widetilde{B}$ and $\widetilde{S}$ be the corresponding
coupled quantities. Under the non-singular conditions $
S_0=B_0>\widetilde{B}_0=\widetilde{S}_0$,
the coupling time has the following moment generating function:
\[
\Expect\bigl[\exp(-\alpha T_{\mathrm{couple}})\bigr] = 1 + \sinh \biggl(\sqrt{
\frac{\alpha}{2}} (B_0-\widetilde {B}_0) \biggr)\log
\tanh \biggl(\sqrt{\frac{\alpha}{2}} \frac
{B_0-\widetilde{B}_0}{2} \biggr).
\]
\end{thm}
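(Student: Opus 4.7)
The plan is to exploit the strong Markov structure of the reflection / synchronized coupling to decompose \(T_\text{couple}\) as a sum of two hitting times of independent Brownian motions, and then to reduce the resulting expression to a classical Laplace-transform computation. Write \(d=B_0-\widetilde{B}_0>0\) and \(\beta=\sqrt{2\alpha}\). Set \(W_t=B_t-B_0\), a standard Brownian motion starting at \(0\); during the reflection stage \(\widetilde{B}_t=\widetilde{B}_0-W_t\), so \(T_1=\inf\{t:W_t=-d/2\}\). The assumption \(S_0=B_0\) gives \(S_{T_1}=B_0+M\) with \(M=\sup_{s\leq T_1}W_s\geq0\), and combined with \(\widetilde{S}_0=\widetilde{B}_0<B_0\) this makes \(S_{T_1}\vee\widetilde{S}_0=B_0+M\) the target level for the synchronised stage. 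By the strong Markov property at \(T_1\), the synchronised stage has duration equal in law to \(H_{d/2+M}\), the first hitting time of level \(d/2+M\) by a fresh standard Brownian motion starting at \(0\), and this is conditionally independent of the reflection-stage path given \(M\). Hence, using \(\Expect{e^{-\alpha H_a}}=e^{-\beta a}\),
\begin{equation*}
 \Expect{e^{-\alpha T_\text{couple}}}\quad=\quad\Expect{e^{-\alpha T_1}e^{-\beta(d/2+M)}}\quad=\quad e^{-\beta d/2}\,\Expect{e^{-\alpha T_1-\beta M}}.
\end{equation*}

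For fixed \(b>0\), the function \(f(x)=\sinh(\beta(b-x))\) satisfies \(\tfrac12 f''=\alpha f\), so \(f(W_t)e^{-\alpha t}\) is a bounded martingale on the strip \([-d/2,b]\). Optional stopping at the exit time from this strip yields the classical two-sided hitting identity
\begin{equation*}
 \Expect{e^{-\alpha T_1}\,\mathbf{1}_{\{M\leq b\}}}\quad=\quad\frac{\sinh(\beta b)}{\sinh(\beta(d/2+b))}.
\end{equation*}
The boundary terms vanish both at \(b=0\) and as \(b\to\infty\), so a Stieltjes integration by parts gives
\begin{equation*}
 \Expect{e^{-\alpha T_1-\beta M}}\quad=\quad\beta\int_0^\infty e^{-\beta b}\,\frac{\sinh(\beta b)}{\sinh(\beta(d/2+b))}\,\d b.
\end{equation*}

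It remains to evaluate the integral. Setting \(c=\beta d/2=\sqrt{\alpha/2}\,d\) and substituting \(v=\beta b+c\), the factor \(e^{-\beta d/2}\) combines with \(\beta\,\d b=\d v\) to give
\begin{equation*}
 \Expect{e^{-\alpha T_\text{couple}}}\quad=\quad\int_c^\infty e^{-v}\,\frac{\sinh(v-c)}{\sinh v}\,\d v.
\end{equation*}
Expanding \(\sinh(v-c)=\sinh v\cosh c-\cosh v\sinh c\) and using the elementary antiderivative
\begin{equation*}
 \int e^{-v}\coth v\,\d v\quad=\quad e^{-v}+\log\tanh(v/2)
\end{equation*}
(checked by direct differentiation, or by the substitution \(w=e^{-v}\)), the integral collapses to \(e^{-c}(\cosh c+\sinh c)+\sinh c\log\tanh(c/2)=1+\sinh c\log\tanh(c/2)\), which is the required formula since \(c=\sqrt{\alpha/2}(B_0-\widetilde{B}_0)\).

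I anticipate no serious obstacle: after the strong Markov decomposition, the analysis reduces to standard optional stopping and an elementary antiderivative. The only non-routine ingredient is the first step, which depends crucially on the hypothesis \(S_0=B_0\) and \(\widetilde{S}_0=\widetilde{B}_0\); without this simplification the target level \(S_{T_1}\vee\widetilde{S}_0\) becomes a more delicate functional of the reflection-stage trajectory, and no closed form for the moment generating function should be expected.
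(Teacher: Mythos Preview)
Your proof is correct and follows essentially the same route as the paper: decompose \(T_\text{couple}\) via the strong Markov property at \(T_1\) into a reflection stage and a synchronised stage, compute the joint transform \(\Expect{e^{-\alpha T_1};\,M\leq b}=\sinh(\beta b)/\sinh(\beta(b+d/2))\), and then integrate against the hitting-time Laplace transform \(e^{-\beta(d/2+b)}\). The only notable difference is that the paper derives the two-sided exit formula by an excursion-theoretic computation (marked Poisson point process of excursions, following Rogers--Williams), whereas you obtain it by the more elementary optional-stopping argument with the harmonic function \(\sinh(\beta(b-x))\); the subsequent integrations are equivalent up to an integration by parts.
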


\begin{pf}
Recall the notation of Section~\ref{sec:algorithm}, and bear in mind
the stipulation that $S_0\neq\widetilde{S}_0$.
It is required to calculate the moment generating function
\[
\Expect\bigl[\exp(-\alpha T_{\mathrm{couple}})\bigr] = \Expect\bigl[\exp(-\alpha
T_2)\bigr] .
\]
%
We can express $T_2$ as the sum of (a) the Brownian hitting time $
T_1$, being the time taken for $B$ to pass from $B_0$ to $\tfrac
{1}{2}(B_0+\widetilde{B}_0)$,
and (b)
a randomized Brownian hitting time $T_2-T_1$, being the time taken
for $B$ to pass from $\tfrac{1}{2}(B_0+\widetilde{B}_0)$
to $M_1=\sup\{B_t\dvt t\leq T_1\}$.
Then
\[
T_2 = T_1+H^1 \bigl(\max\{S_0
\vee\widetilde{S}_0, M_1\}-\tfrac
{1}{2}(B_0+
\widetilde{B}_0) \bigr) ,
\]
where $H^1(a)$ is the time taken for a standard Brownian motion to
pass from $0$ to $a$.\vspace*{1pt}

We outline the calculations for the special case $B_0=S_0$ and $
\widetilde{B}_0=\widetilde{S}_0$ (though the calculations can be
extended to the general case).
Thus, we are concerned with the moment generating function of $
T_1+H^1(M_1-\frac{1}{2}(B_0+\widetilde{B}_0))$.

The first task is to investigate aspects of the joint distribution of $
T_1$ and $M_1$, specifically
\[
Q_\alpha(a) = \Expect\bigl[\exp(-\alpha T_1);
M_1<a+B_0\bigr] .
\]
This can be calculated using excursion theory, for example by adapting
the calculations of Rogers and Williams \cite
{RogersWilliams-1987}, \S56.
For convenience, we set $\alpha^*=\sqrt{2\alpha}$ and $
b=B_0-\frac{1}{2}(B_0+\widetilde{B}_0)=\frac{1}{2}(B_0-\widetilde
{B}_0)$.
Suppose that excursions are marked using an independent Poisson($
\alpha$) point process on the time axis. Then we distinguish
the following kinds of excursions of $B$ from $B_0$, noting the
rates at which they happen when the excursions are viewed as points of
a Poisson process of excursions with respect to
local time:
\renewcommand\theenumi{\arabic{enumi}}
\renewcommand\labelenumi{\arabic{enumi}.}
\begin{enumerate}[5.]
\item\label{item:upmarked} upward excursions which do not rise above the level $a+B_0$ but
are marked (occurring at rate $\tfrac{1}{2}(\alpha^*\coth(\alpha^*
a)-\frac{1}{a})$);
\item\label{item:farup} upward excursions which rise above the level $a+B_0$ (occurring
at rate $\tfrac{1}{2a}$);
\item\label
{item:downmarked} downward excursions which do not fall below the level $\tfrac
{1}{2}(B_0+\widetilde{B}_0)$ but are marked (occurring at rate $
\tfrac{1}{2}(\alpha^*\coth(\alpha^* b)-\tfrac{1}{b})$);
\item\label{item:fardown} downward excursions which fall below the level $\tfrac
{1}{2}(B_0+\widetilde{B}_0)$ (occurring at rate $\tfrac{1}{2b}$);
\item\label
{item:fardownmarked} downward excursions which fall below the level $\tfrac
{1}{2}(B_0+\widetilde{B}_0)$ but are not marked {before} hitting $
\tfrac{1}{2}(B_0+\widetilde{B}_0)$ (occurring at rate
$\tfrac{\alpha^*}{2}\operatorname{cosech}(\alpha^* b)$).
\end{enumerate}
These rates are computed as in the discussion of Rogers and Williams \cite{RogersWilliams-1987}, Section~56, based on
the identification of the
law of the Brownian excursion discussed there.
Thus, $Q_\alpha(a)$ can be computed as the probability that we see a
downward excursion falling to level $\frac{1}{2}(B_0+\widetilde
{B}_0)$ {before} it has been marked
(that is, an excursion of type \ref{item:fardownmarked}) \emph
{before} we ever see
upward excursions rising to level $a+B_0$ (of type \ref{item:farup}),
or staying below this level but marked (type \ref{item:upmarked}),
or downward excursions which do not fall below the level $\frac
{1}{2}(B_0+\widetilde{B}_0)$ but are marked (type \ref{item:downmarked}),
or downward excursions which fall below the level $\frac
{1}{2}(B_0+\widetilde{B}_0)$ but are marked {before} hitting $\frac
{1}{2}(B_0+\widetilde{B}_0)$
(type \ref{item:fardown} but not type \ref{item:fardownmarked}).

We can therefore use Poisson point process theory to compute
%
\begin{eqnarray}
\label{eq:mgf} \hspace*{-15pt}Q_\alpha(a) &=& \frac{\sklfrac{\alpha^*}{2}\operatorname{cosech}(\alpha^* b)}{
\afrac{1}{2a}+
\sklfrac{1}{2}(\alpha^*\coth(\alpha^* a)-\sfrac{1}{a})+
\sklfrac{1}{2}(\alpha^*\coth(\alpha^* b)-\sfrac{1}{b})+
(\afrac{1}{2b})
} \nonumber
\\[-8pt]\\[-8pt]
&=& \frac{\sinh(\alpha^* a)}{\sinh(\alpha^* (a+b))}.\nonumber
\end{eqnarray}
Consequently, we can show that the desired moment generating function
is given by
\begin{eqnarray*}
&&\Expect\bigl[\exp (-\alpha T_{\mathrm{couple}} )\bigr] \\
&&\quad = \Expect\biggl[\exp
\biggl(-\alpha\biggl(T_1+H^1\biggl(M_1-
\frac{1}{2}(B_0+\widetilde {B}_0)\biggr)\biggr)
\biggr)\biggr]
\\
&&\quad = \int_0^\infty\Expect\biggl[\exp \biggl(-\alpha
H^1\biggl(a+B_0-\frac
{1}{2}(B_0+
\widetilde{B}_0)\biggr)\biggr) \biggr]Q_\alpha(\d a)
\\
&&\quad = \int_0^\infty\Expect\bigl[\exp \bigl(-\alpha
H^1(a+b) \bigr)\bigr]Q_\alpha '(a)\,\d a
\\
&&\quad = \alpha^* \sinh\bigl(\alpha^* b\bigr) \int_0^\infty
\frac{\Expect[\exp
(-\alpha H^1(a+b) )]
}{\sinh^2(\alpha^* (a+b))} \,\d a
\\
&&\quad = \alpha^* \sinh\bigl(\alpha^* b\bigr) \int_0^\infty
\frac{\mathrm{e}^{-\alpha^*(a+b)}
}{\sinh^2(\alpha^* (a+b))} \,\d a = 1 + \sinh\bigl(\alpha^*b\bigr)\log\tanh \biggl(
\frac{1}{2}\alpha^*b \biggr)
\end{eqnarray*}
%
(using $b=\frac{1}{2}(B_0-\widetilde{B}_0)$).
\end{pf}

These excursion-theoretic calculations have been checked by simulation,
although direct simulation is computationally demanding
because of the heavy tails of the Brownian hitting times involved in
the reflection/synchronized coupling. This can to some extent
be mitigated by the use of ``Rao--Blackwellization'' using the known
formula for the moment generating function of the Brownian
first passage time.

\subsection{Comparison with maximal coupling}
It is natural to ask whether the optimal immersed coupling is in fact a
maximal coupling.
Numerical evidence is that this is not the case, as is readily seen by
computing the total mass of the minimum of the joint densities
for $(B_t,S_t)$ and $(\widetilde{B}_t,\widetilde{S}_t)$, and
comparing with the coupling probability for optimal immersion coupling.
From Revuz and Yor \cite{RevuzYor-1991}, Example~3.14
part $2^o$,
the joint density of $B_t$ and $S_t$ (supposing $S_0=B_0=0$) is
given by
%
\begin{equation}
\label{eq:mgf-maximal} \sqrt{\frac{2}{\uppi t}}\frac{2s-b}{t}\exp \biggl(-
\frac
{(2s-b)^2}{2t} \biggr)\qquad  \mbox{for } b\leq s, s>0 .
\end{equation}
(This follows readily from the reflection principle.) We can use this
to evaluate numerically the moment generating function of the maximal
coupling times
(based on the maximal coupling derived from the
method suggested by
Sverchkov and Smirnov \cite{SverchkovSmirnov-1990}).
%
A numerical comparison of the moment generating functions for these couplings
shows
that they do not have the same distribution. This is numerical evidence
that the optimal immersed coupling is distinct from (and slower than)
the maximal coupling.

An explicit non-immersion maximal coupling for Brownian motion
together with local time, for particular sets of randomized initial
conditions, can be constructed by adapting the recipe of Pitman \cite{Pitman-1976}
for discrete time and space,
and using the $2M-X$ theorem (Pitman \cite
{Pitman-1975}) to reduce the
construction to the case of the
scalar diffusion which is the
three-dimensional Bessel process.



\section{Equi-filtration couplings of Brownian motion together with~local~time}\label{sec:equi-filtration}

As noted in Section~\ref{sec:algorithm}, the reflection/synchronized
coupling is an {equi-filtration} coupling when viewed as a coupling
for the absolute value $|X|$ of Brownian motion and $L^{(0)}$ its
local time at~$0$. However
the coupling is not {equi-filtration} when
it is required to couple not just the absolute value $|X|$
but also $X$, the Brownian motion itself. This follows immediately
from consideration of the stochastic differential equation
\eqref{eq:weak-not-strong}, which is of Tanaka type when viewed as
generating the coupled Brownian motion $\widetilde{X}$ from $\int
\sgn(X)\,\d X$.
Unless $X$ and $\widetilde{X}$ are identical, it is not possible
to extract statistically appropriate
signs for the excursions of $\widetilde{X}$ from the natural filtration
$\{\mathcal{F}_t\dvt t\geq0\}$ of $X$.

The coupling of $(X,L^{(0)})$ with $(\widetilde{X},\widetilde
{L}^{(0)})$ can be modified to be equi-filtration
by replacing $\sgn(\widetilde{X}_t)$ and $\sgn({X}_t)$ in the
coupling control by time-delayed versions,
at the price of delaying the time of successful coupling. Given
a positive continuous non-increasing function $\psi\dvtx [0,\infty)\to
[0,\infty)$,
we introduce the delayed time-change
%
\begin{equation}
\label{eq:delay} \sigma(t) = t -\bigl(\psi(t)\wedge t\bigr) .
\end{equation}
%
%
For $t>0$ we define a new coupled Brownian motion $\hat{X}_t$,
starting at $\hat{X}_0=\widetilde{X}_0$ but defined
up to time $T_2$ as the solution to a time-delayed
version of \eqref{eq:weak-not-strong}:
%
\begin{equation}
\label{eq:time-delayed-sde} \d\hat{X}_t = \sgn(\hat{X}_{\sigma(t)})
J_t \sgn({X}_{{\sigma
}(t)}) \,\d X_t .
\end{equation}
Here we use \eqref{eq:co-adapted-coupling} to define the control $J$
in terms of $X$ \emph{via} the stopping time $T_1$:
\[
J_t = \cases{ -1 & \quad \mbox{if} $t \leq T_1$,
\cr
+1 &\quad
\mbox{otherwise}. }
\]
Of course this exploits the remark in Section~\ref{sec:algorithm},
that $T_1$ and $T_2$ can be defined as hitting times
for $B=L^{(0)}-|X|$.
Adopting the usual convention that $\sgn(0)=1$,
and arguing from the positivity and continuity of $\psi$,
we can solve the stochastic differential equation \eqref{eq:time-delayed-sde}
step by step over successive small time intervals. This ensures that
$\sgn(\hat{X}_{\sigma(t)})$ and $\sgn({X}_{\sigma(t)})$
are defined and
measurable with respect to $\mathcal{F}_{\sigma(t)}=\sigma\{
X_s\dvt s\leq\sigma(t)\}$.

This construction makes it clear that the coupled $\hat{X}$ is
immersed in the natural filtration of $X$. However the reverse
also holds:

%
\begin{lem}\label{lem:isofiltration}
For $\hat{X}$ defined in terms of $X$ using the time-delayed
stochastic differential equation \eqref{eq:time-delayed-sde},
it is the case that $X$ is immersed in the natural filtration of $
\hat{X}$, so that the coupling of $X$ and $\hat{X}$ is
{equi-filtration}.
\end{lem}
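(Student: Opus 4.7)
The plan is to show that the path of $X$ on any finite time interval is measurable with respect to the natural filtration of $\hat{X}$; combined with the preceding paragraph's observation that $\hat{X}$ is already adapted to the natural filtration of $X$, this will force the two natural filtrations to coincide and so deliver the \equifiltration property. The starting point is the observation that the integrand in \eqref{eq:time-delayed-sde} is the $\{\pm 1\}$-valued quantity
\[
\epsilon_t \quad:=\quad \sgn(\hat{X}_{\sigma(t)}) \, J_t \, \sgn(X_{\sigma(t)})\,,
\]
so $\epsilon_t$ is its own multiplicative inverse. Multiplying both sides of \eqref{eq:time-delayed-sde} by $\epsilon_t$ yields the formal inversion
\[
\d X_t \quad=\quad \epsilon_t \, \d \hat{X}_t\,,
\]
so that it suffices to exhibit $\epsilon_t$ as an $\mathcal{F}^{\hat{X}}$-adapted process (whence $X - X_0 = \int \epsilon \, \d\hat{X}$ will automatically be $\mathcal{F}^{\hat{X}}$-adapted).

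The reconstruction of $X$ from $\hat{X}$ would then proceed inductively, exploiting the strict positivity and monotonicity of $\psi$: on any compact sub-interval of $[0,\infty)$ the delay $\psi(t) \wedge t$ is bounded below by some positive constant away from the initial phase, while $\sigma \equiv 0$ on a small initial interval $[0,t_0]$. On $[0,t_0]$ both $\sgn(\hat{X}_{\sigma(t)}) = \sgn(\hat{X}_0)$ and $\sgn(X_{\sigma(t)}) = \sgn(X_0)$ are known constants, so $\epsilon_t$ varies only through $J_t$. Since the remark in Section \ref{sec:algorithm} realises $T_1$ as the hitting time of $\tfrac{1}{2}(B_0+\widetilde{B}_0)$ by the continuous functional $B = L^{(0)} - |X|$, any switch-time of $J$ is identifiable from $X$ itself, so $X|_{[0,t_0]}$ is recovered as an $\mathcal{F}^{\hat{X}}_{t_0}$-measurable functional. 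For the inductive step, suppose $X|_{[0,s]}$ has already been constructed as an $\mathcal{F}^{\hat{X}}_s$-measurable path. Then one may extend to $[s, s+\eta]$ for some $\eta > 0$ governed by the delay at $s$: on this extension $\sigma(t) \leq s$, so the signs appearing in $\epsilon_t$ are determined by the already-reconstructed data, and the same argument delivers $X|_{[0,s+\eta]}$ as $\mathcal{F}^{\hat{X}}_{s+\eta}$-measurable. Iteration covers any compact time interval in finitely many steps because the minimum delay on a compact range is bounded below.

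The main obstacle is the interaction between reconstructing $X$ and identifying the stopping times $T_1$, $T_2$ that control $J$. The resolution is that both are hitting times of pathwise-continuous functionals of $X$: the moment $X$ is known on $[0,s]$, we know with certainty whether $T_1 \leq s$ and whether $T_2 \leq s$, and any switch of $J$ falling inside a current extension step is located automatically by the same inductive argument. The convention $\sgn(0) = 1$ and the possibility that $X$ touches zero during reconstruction are handled exactly as in the forward step-by-step construction of \eqref{eq:time-delayed-sde} under which $\hat{X}$ was defined, so they present no additional difficulty.
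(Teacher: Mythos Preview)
Your proposal is correct and follows essentially the same route as the paper: invert \eqref{eq:time-delayed-sde} using that the integrand takes values in $\{\pm1\}$, then use the strict positivity of the delay $\psi$ to reconstruct $X$ from $\hat{X}$ step by step, identifying $T_1$ (and hence $J$) along the way as a hitting time of the already-recovered $X$-path. The only organisational difference is that the paper splits the argument explicitly into the phase $[0,T_1]$ (where $J\equiv-1$ is constant, so the inverted SDE involves only delayed signs) and the phase $[T_1,T_2]$ (where $J\equiv+1$), establishing first that $X$ stopped at $T_1$ is $\mathcal{F}^{\hat{X}}$-adapted and hence that $T_1$ is an $\mathcal{F}^{\hat{X}}$-stopping time; your unified inductive reconstruction interleaves these two phases but amounts to the same thing. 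One small expository point: your sentence ``it suffices to exhibit $\epsilon_t$ as an $\mathcal{F}^{\hat{X}}$-adapted process'' is slightly awkward, since $\epsilon_t$ depends on $X$ through both $\sgn(X_{\sigma(t)})$ and $J_t$, so adaptedness of $\epsilon$ and of $X$ must be established simultaneously---but your inductive argument does exactly that, so the slip is purely rhetorical.
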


\begin{pf}
The control $J$ appearing in \eqref{eq:time-delayed-sde} satisfies $
J\equiv-1$ up to the stopping time $T_1$.
Accordingly $\hat{X}\equiv\hat{X}^*$ up to $T_1$, where
\[
\d\hat{X}^*_t = -\sgn\bigl(\hat{X}^*_{\sigma(t)}\bigr)
\sgn({X}_{{\sigma
}(t)}) \,\d X_t .
\]
However, we may rewrite this last stochastic differential equation as
\[
\d X_t = -\sgn({X}_{{\sigma}(t)}) \sgn\bigl(\hat{X}^*_{\sigma(t)}
\bigr) \,\d \hat{X}^*_t ,
\]
and so we find that
\[
\d X_t = -\sgn({X}_{{\sigma}(t)}) \sgn(\hat{X}_{\sigma(t)}) \,\d
\hat{X}_t
\]
holds up to time $T_1$. Since $T_1$ is a hitting time of $X$, it
follows that $X$ stopped at time $T_1$ is
adapted to the filtration of $\hat{X}$, and thus that $T_1$ and
thus $J$ are adapted to the natural filtration of $\hat{X}$.

Arguing from time $T_1$ onwards, since $J_t=1$ for $t>T_1$, we
can re-write \eqref{eq:time-delayed-sde} as
\[
\d X_t = \sgn({X}_{{\sigma}(t)}) J_t \sgn(
\hat{X}_{\sigma(t)})\, \d \hat{X}_t = \sgn({X}_{{\sigma}(t)}) \sgn(
\hat{X}_{\sigma(t)})\, \d\hat{X}_t\qquad  \mbox{for } T_1<t\leq
T_2 .
\]
It follows that $X$ up to time $T_2$ is adapted to the natural
filtration of $\hat{X}$. This establishes the mutual immersion property.
\end{pf}

Of course $\hat{X}\neq\widetilde{X}$, and therefore
we cannot assume that the {equi-filtration} coupling will have
succeeded by time $T_2$.
However,
we can use the properties of the reflection/synchronized coupling
to argue not only that
the paths of $|\widetilde{X}|$ and $|\hat{X}|$ will be close in
probability,
but also that the same is true of
the respective local times $\widetilde{L}^{(0)}$ and $\hat
{L}^{(0)}$.

%
\begin{lem}\label{lem:isofiltration-approx}
There is a sequence of {equi-filtration} couplings of $(\hat
{X}^{(n)},\hat{L}^{(0);(n)})$ with $(X, L^{(0)})$
such that $(|\hat{X}^{(n)}|,\hat{L}^{(0);(n)})$ converges in
probability under supremum norm to $(|\widetilde{X}|,\widetilde
{L}^{(0)})$
(using the reflection/synchronized coupling):
\[
\Prob\Bigl[ \sup_t \bigl\{ \bigl\llvert \bigl|
\hat{X}^{(n)}_t\bigr|-|\widetilde{X}_t|\bigr\rrvert +
\bigl\llvert \hat {L}^{(0);(n)}_t-\widetilde{L}^{(0)}_t
\bigr\rrvert \bigr\} >4^{-n} \Bigr] < 4^{-n} .
\]
\end{lem}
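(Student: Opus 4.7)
My plan is to choose a sequence of positive continuous non-increasing delays \(\psi^{(n)}\) with \(\|\psi^{(n)}\|_\infty\to 0\) sufficiently fast, where the exact rate will be fixed at the end. I will work in the representation \eqref{eq:supremum}: set \(\hat B^{(n)}_t = \hat L^{(0);(n)}_t - |\hat X^{(n)}_t|\) and \(\widetilde B_t = \widetilde L^{(0)}_t - |\widetilde X_t|\). Because each local time is recovered from its \(B\)-process by the supremum map of \eqref{eq:supremum}, which is \(1\)-Lipschitz in supremum norm, and the absolute value is then recovered by subtraction, the task reduces to showing that \(\sup_{t\leq T_2}|\widetilde B_t - \hat B^{(n)}_t|\) is smaller than a suitable threshold with high probability. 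Past \(T_2\) the natural continuation of the construction (synchronised coupling) makes the two \(B\)-paths agree, so the supremum in the lemma effectively runs on \([0,T_2]\).

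Applying Tanaka's formula to \eqref{eq:weak-not-strong} and to \eqref{eq:time-delayed-sde} produces the representations
\[
\d\widetilde B_t \;=\; J_t\, \d B_t,\qquad \d\hat B^{(n)}_t \;=\; J_t\, K^{(n)}_t\, \d B_t,
\]
where \(B = L^{(0)}_0 - |X_0| - \int\sgn(X)\d X\) is the driving Brownian motion of \eqref{eq:supremum} and the \(\{\pm 1\}\)-valued predictable process \(K^{(n)}\) is given by
\[
K^{(n)}_t \;=\; \sgn(X_t)\,\sgn(X_{\sigma^{(n)}(t)})\,\sgn(\hat X^{(n)}_t)\,\sgn(\hat X^{(n)}_{\sigma^{(n)}(t)}).
\]
Thus \(\widetilde B - \hat B^{(n)}\) is a continuous martingale with quadratic variation at time \(t\) equal to \(4\bigl|\{s\leq t:K^{(n)}_s = -1\}\bigr|\). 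By Doob's \(L^2\) maximal inequality it then suffices to prove that this Lebesgue measure tends to zero in probability, with a rate summable against \(4^{-n}\).

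The event \(\{K^{(n)}_s = -1\}\) forces a sign change of at least one of \(X\) or \(\hat X^{(n)}\) during the window \([\sigma^{(n)}(s), s]\), whose length is at most \(\|\psi^{(n)}\|_\infty\). A crucial observation is that \(\hat X^{(n)}\) is itself a Brownian motion, since its defining SDE \eqref{eq:time-delayed-sde} drives \(X\) by a \(\pm1\)-valued predictable integrand, so L\'evy's characterisation applies. Hence \(\{s:K^{(n)}_s=-1\}\cap[0,T_2]\) is contained in the \(\|\psi^{(n)}\|_\infty\)-thickening of the union of two Brownian zero sets. The main obstacle will be the quantitative bound on the Lebesgue measure of such a thickening. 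Brownian excursion theory (using the \(\ell^{-3/2}\d\ell\) It\^o excursion measure) furnishes an estimate of the form
\[
\bigl|(Z(W)\oplus[0,\varepsilon])\cap[0,t]\bigr| \;\leq\; C\sqrt{\varepsilon}\,L^{(0)}_t(W)\bigl(1+o_{\mathbb P}(1)\bigr)
\]
for any Brownian motion \(W\), where \(L^{(0)}_t(W)\) is its local time at zero and \(Z(W)\) its zero set. Applied to both \(X\) and \(\hat X^{(n)}\), and combined with a moment bound for \(L^{(0)}_{T_2}\) and \(\hat L^{(0);(n)}_{T_2}\), this delivers quadratic variation of order \(\sqrt{\|\psi^{(n)}\|_\infty}\) in probability.

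To conclude, I would take \(\|\psi^{(n)}\|_\infty\) small enough---for instance of order \(16^{-n}\) combined with a tail bound for \(L^{(0)}_{T_2}\)---so that Doob's inequality followed by Markov's inequality produces the stated rate \(4^{-n}\). The \equifiltration{} property of each member of the resulting sequence is supplied directly by Lemma \ref{lem:isofiltration}, so nothing further is needed.
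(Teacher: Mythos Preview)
Your reduction to controlling \(\sup_t|\hat B_t-\widetilde B_t|\) via Tanaka, the representation \(\d(\hat B-\widetilde B)=J(K^{(n)}-1)\,\d B\), and Doob's inequality is exactly the paper's set-up. The divergence comes at the quantitative step, and there your proposal has a real gap.

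You plan to bound \(E\bigl[\langle\hat B-\widetilde B\rangle_{T_2}\bigr]\) by (constant)\(\times\sqrt{\|\psi^{(n)}\|_\infty}\,E[L^{(0)}_{T_2}]\), invoking a ``moment bound for \(L^{(0)}_{T_2}\)''. That moment does not exist. In the \((B,S)\) picture, \(L^{(0)}_{T_2}=S_{T_2}=M_1\vee\widetilde S_0\), where \(M_1=\sup_{s\le T_1}B_s\) is the maximum of \(B\) before it first hits \(\tfrac12(B_0+\widetilde B_0)\); a gambler's-ruin computation gives \(P(M_1>m)=\tfrac12(B_0-\widetilde B_0)/(m-\tfrac12(B_0+\widetilde B_0))\), so \(E[M_1]=\infty\). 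Equivalently, Brownian hitting times have \(t^{-3/2}\) tails, so \(E[\sqrt{T_2}]=\infty\), and hence the expected Lebesgue measure of the \(\varepsilon\)-thickening of a Brownian zero set over \([0,T_2]\) is infinite. Your \(o_{\mathbb P}(1)\) estimate is also awkward here because the zero set of \(\hat X^{(n)}\) changes with \(n\), so there is no fixed set whose thickening is shrinking.

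The paper sidesteps all of this by \emph{not} restricting to \([0,T_2]\) and \emph{not} taking \(\psi\) essentially constant. It chooses \(\psi(t)=\varepsilon^3/(t-\varepsilon+1)^3\), decaying like \(t^{-3}\), and bounds \(E\bigl[\int_0^\infty\mathbb{1}[K_s=-1]\,\d s\bigr]\) directly by \(\int_0^\infty P(\sgn\overline X_{\sigma(t)}\neq\sgn\overline X_t)\,\d t\), where the integrand is computed in closed form as \(\tfrac1\pi\arctan\sqrt{\psi(t)/(t-\psi(t))}\) via rotational symmetry of the bivariate normal law. The decay of \(\psi\) makes this integral finite (of order \(\varepsilon\)), yielding \(E[\sup_t(\hat B_t-\widetilde B_t)^2]\le C\varepsilon\) without any appeal to \(T_2\) or excursion theory. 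Your route could be repaired by first localising \(T_2\) to a deterministic \([0,t_{\max}]\) with high probability (this is precisely the manoeuvre the paper uses later for the BKR analogue), but as written the argument rests on an unavailable moment.
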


It suffices to exhibit a sequence for which the supremum converges to $
0$ in probability.
Note that we do \emph{not} assert that $\hat{X}^{(n)}$ converges to
$\widetilde{X}$ in probability:
if we were able to remove absolute values then this
would contradict the known fact that the stochastic differential
equation \eqref{eq:weak-not-strong} of Tanaka type
can have no strong solutions.

\begin{pf*}{Proof of Lemma~\ref{lem:isofiltration-approx}}
To simplify notation, we take $X_0=0$.
Note also that by construction $\hat{L}^{(0)}_0=\widetilde
{L}^{(0)}_0$
and $\hat{X}_0=\widetilde{X}_0$.

We shall select $\psi$ depending on $\varepsilon$ such that a
suitably fast convergent sequence $\varepsilon_n\to0$
delivers the required $(\hat{X}^{(n)},\hat{L}^{(0);(n)})$.

First note that, if we set
$\hat{S}=\hat{L}^{(0)}$ and $\hat{B}=\hat{L}^{(0)}-|\hat{X}|$,
then
\[
\d\hat{B} = \sgn(\hat{X})\sgn(\hat{X}_\sigma) J \sgn(X_\sigma )
\,\d X .
\]
By choice of initial conditions, $\hat{B}_0=\widetilde{B}_0$.
It suffices to show convergence in probability of $\sup_t\{|\hat
{B}_t-\widetilde{B}_t|\}$, the supremum norm of the difference.
Because $|J|=|\sgn(X)|=|\sgn(\hat{X})|=1$ we can use Doob's $
L^2$ submartingale inequality,
the $L^2$ isometry for Brownian stochastic integrals,
and Jensen's inequality
to deduce that
\begin{eqnarray*}
&&\Expect\Bigl[\sup_t\bigl\{(\hat{B}_t-
\widetilde{B}_t)^2\bigr\}\Bigr] \\
&&\quad = \Expect\biggl[\sup
_t \biggl\{ \biggl(\int_0^t
\bigl( \sgn(\hat{X})\sgn(\hat{X}_\sigma)J\sgn(X_\sigma) - J
\sgn(X) \bigr) \,\d X \biggr)^2 \biggr\}\biggr]
\\
&&\quad \leq 4\Expect\biggl[\int_0^\infty\bigl| \sgn(
\hat{X}_t)\sgn(\hat{X}_{\sigma(t)}) \sgn({X}_{\sigma(t)}) -
\sgn({X}_{t}) \bigr|^2\,\d t\biggr]
\\
&&\quad \leq 8{\int_0^\infty\Expect\bigl[\bigl|\sgn(
\hat{X}_{t})\sgn(\hat{X}_{\sigma
(t)}) - 1\bigr|^2\bigr]\,\d t}
+ 8{\int_0^\infty\Expect\bigl[\bigl|
\sgn({X}_{\sigma(t)}) - \sgn({X}_t) \bigr|^2\bigr]\,\d t}
\\
&&\quad = 32{\int_0^\infty\Prob\bigl[\sgn(
\hat{X}_{t})\neq\sgn(\hat{X}_{\sigma
(t)})\bigr]\,\d t} + 32{\int
_0^\infty\Prob\bigl[\sgn({X}_{\sigma(t)})\neq
\sgn({X}_t)\bigr]\,\d t} . 
%
\end{eqnarray*}
%

Both $\hat{X}$ and $X$ are Brownian motions, though not
necessarily begun at $0$.
It is therefore immediate that $\Prob[\sgn(\hat{X}_{t})\neq\sgn
(\hat{X}_{\sigma(t)})]$ and $\Prob[\sgn({X}_{\sigma(t)})\neq
\sgn({X}_t)]$
are both dominated by $\Prob[\sgn(\overline{X}_{t})\neq\sgn
(\overline{X}_{\sigma(t)})]$ for $\overline{X}$ a standard
Brownian motion begun at $0$
(since on average $\hat{X}$ will have to travel further to change
sign than the standard Brownian motion $\overline{X}$).
Moreover rotational symmetry of the standard bivariate normal
distribution reveals, if $t>\psi(t)$,\vspace*{-1pt}
\begin{eqnarray*}
\Prob\bigl[\sgn(\overline{X}_{\sigma(t)})\neq\sgn(\overline{X}_t)
\bigr] &= &\Prob\bigl[\sgn(\overline{X}_{\sigma(t)})\neq\sgn\bigl(\overline
{X}_{\sigma(t)}+(\overline{X}_t-\overline{X}_{\sigma(t)})\bigr)
\bigr]
\\[-0.5pt]
&=& \frac{1}{2}\Prob\bigl[ |\overline{X}_{\sigma(t)}| < |
\overline{X}_t-\overline{X}_{\sigma(t)}| \bigr]\\[-0.5pt]
& = &\frac{1}{2}
\Prob\biggl[ \frac{|\overline{X}_{\sigma(t)}|/\sqrt{t-\psi(t)}}{|(\overline
{X}_t-\overline{X}_{\sigma(t)})|/\sqrt{\psi(t)}} < \sqrt{ \frac{\psi(t)}{t-\psi(t) } } \biggr]
\\[-0.5pt]
&=& \frac{1}{\uppi}\tan^{-1}\sqrt{ \frac{\psi(t)}{t-\psi(t)} } .
\end{eqnarray*}
Thus we obtain, for $\varepsilon>\psi(\varepsilon)$,\vspace*{-1pt}
\begin{eqnarray*}
\Expect\Bigl[\sup_t\bigl\{(\hat{B}_t-
\widetilde{B}_t)^2\bigr\}\Bigr] &\leq& 64 \int
_0^\infty\Prob\bigl[\sgn(\overline{X}_{\sigma(t)})
\neq\sgn (\overline{X}_t)\bigr]\,\d t
\\[-0.5pt]
&\leq& 64 \biggl(\varepsilon+ \frac{1}{\uppi} \int_{\varepsilon}^\infty
\tan^{-1}\sqrt{ \frac{\psi(t)}{t-\psi(t)} } \,\d t \biggr) \\[-0.5pt]
&\leq& 64 \biggl(
\varepsilon+ \frac{1}{\uppi}\int_{\varepsilon}^\infty \sqrt{
\frac{\psi(t)}{t-\psi(t)} } \,\d t \biggr) .
\end{eqnarray*}
For any $\varepsilon\in(0,\frac{1}{2})$, we set\vspace*{-1pt}
\[
\psi(t) = \cases{ \varepsilon^3 & \quad \mbox{if} $t\in[0,\varepsilon)$,
\cr
\displaystyle \frac{\varepsilon^3}{(t-\varepsilon+1)^3} & \quad \mbox{if} $t\geq \varepsilon$. }
\]
Then (a) $\psi$ is positive continuous non-decreasing over $
[0,\infty)$, (b) $\psi(t)\leq t$ for $t\geq\varepsilon$,
and hence~(c)\vspace*{-1pt}
\begin{eqnarray*}
\Expect\Bigl[\sup_t\bigl\{(\hat{B}_t-
\widetilde{B}_t)^2\bigr\}\Bigr] &\leq& 64 \biggl(1 +
\frac{1}{\uppi}\int_{\varepsilon}^\infty \frac{1}{\sqrt{
(t/\varepsilon) (t-\varepsilon+1)^3-\varepsilon^2}
}
\,\d t \biggr)\varepsilon
\\[-0.5pt]
&\leq& 64 \biggl(1 + \frac{2}{\uppi}\int_{0}^\infty
\frac{1}{\sqrt{
4(u+1)^3-1}
} \,\d u \biggr)\varepsilon = 105.557\ldots  \times\varepsilon.
\end{eqnarray*}
The result follows, because $L^2$ convergence of random variables
implies convergence in probability.
\end{pf*}

%
\begin{thm}\label{thm:isofiltration-coupling}
There are successful {equi-filtration} couplings of Brownian motion
together with local time starting from any pair of initial conditions
$(X_0,L^{(0)}_0)$ and $(\widetilde{X}_0,\widetilde{L}^{(0)}_0)$.
\end{thm}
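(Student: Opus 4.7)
The plan is to iteratively concatenate an infinite sequence of the time-delayed {\equifiltration} couplings of Lemmas \ref{lem:isofiltration} and \ref{lem:isofiltration-approx}, with the delay parameters tuned so that the stages stack up in finite total time and the residual gap between $(\hat{X},\hat{L}^{(0)})$ and $(X,L^{(0)})$ shrinks to zero by the concatenation's limit.

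First I would proceed by induction on stages $n=1,2,\ldots$ with boundary times $0=T^{(0)}<T^{(1)}<T^{(2)}<\cdots$. On stage $n$, starting from the current values of $(X,L^{(0)})$ and $(\hat{X},\hat{L}^{(0)})$, I run the time-delayed SDE \eqref{eq:time-delayed-sde} with delay function $\psi_n$ chosen (\emph{via} Lemma \ref{lem:isofiltration-approx}) so that the associated approximation error is bounded by $4^{-n}$ with probability at least $1-4^{-n}$; I terminate the stage at the reflection/synchronized coupling time $T^{(n)}$ of the underlying $(B,S)$ coupling. At $T^{(n)}$ the \emph{target} reflection/synchronized coupling succeeds exactly, so $X_{T^{(n)}}=0$ and $L^{(0)}_{T^{(n)}}=S_{T^{(n)}}$, while the corresponding values of $(\hat{X},\hat{L}^{(0)})$ lie within $4^{-n}$ of these values with high probability. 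Since each $T^{(n)}$ is a hitting time of $B=L^{(0)}-|X|$, it is a stopping time in both natural filtrations, and the stagewise concatenation preserves the equi-filtration property given by Lemma \ref{lem:isofiltration}.

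Next I would verify that $\tau=\lim_n T^{(n)}$ is almost surely finite. At the start of stage $n+1$, the $B$-gap between the two processes is at most $2\cdot 4^{-n}$ on an event of probability at least $1-4^{-n}$. The moment-generating-function formula in Theorem \ref{thm:rates} yields a quadratic expected stage-duration in the initial gap (for small gap), so $\sum_n \Expect{T^{(n+1)}-T^{(n)}}<\infty$ and thus $\tau<\infty$ a.s.~by Markov's inequality and Borel--Cantelli.

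Finally, Borel--Cantelli also forces both $|\hat{X}_{T^{(n)}}|$ and $|\hat{L}^{(0)}_{T^{(n)}}-L^{(0)}_{T^{(n)}}|$ to converge to zero almost surely; continuity of all four paths at $\tau$ then gives $\hat{X}_\tau=X_\tau=0$ and $\hat{L}^{(0)}_\tau=L^{(0)}_\tau$, so the coupling is successful at $\tau$. For $t\geq\tau$ one adopts $\d\hat{X}=\d X$, which is trivially equi-filtration and preserves coincidence of local times. The main obstacle is extracting a uniform quadratic scaling of expected coupling time from the MGF in Theorem \ref{thm:rates} as the initial gap shrinks to zero, and handling the singular case $S_0=\widetilde{S}_0$ (a possibility only at the first stage) by a brief preliminary synchronous segment to exit the singular regime before the inductive scheme takes over.
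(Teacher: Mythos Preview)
Your overall architecture --- concatenate time-delayed {\equifiltration} approximations to the reflection / synchronized coupling, with shrinking error tolerances, and show the limit is a finite stopping time at which the processes coincide --- is exactly the paper's strategy. But your argument for finiteness of \(\tau=\lim_n T^{(n)}\) has a genuine gap.

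You write that Theorem \ref{thm:rates} ``yields a quadratic expected stage-duration in the initial gap'', and conclude \(\sum_n\Expect{T^{(n+1)}-T^{(n)}}<\infty\). This is false: the reflection / synchronized coupling time \(T_2\) is the sum of two Brownian first-passage times, each of which has \emph{infinite mean}. One can see this directly from the moment generating function in Theorem \ref{thm:rates}: with \(x=\sqrt{\alpha/2}\,(B_0-\widetilde{B}_0)\), the formula behaves like \(1+x\log(x/2)\) as \(\alpha\downarrow0\), whose \(\alpha\)-derivative diverges to \(-\infty\), so \(\Expect{T_\text{couple}}=+\infty\) for every nonzero initial gap. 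Hence the summability of expectations you invoke is not available, and neither Markov's inequality nor Borel--Cantelli can be applied in the way you propose.

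The paper circumvents this by working with the \emph{distribution} of the coupling time rather than its mean. Brownian scaling shows that, from initial data with gap of order \(\varepsilon\), the law of \(T_2/\varepsilon^2\) is fixed; one can therefore choose thresholds \(\varepsilon_n\downarrow0\) so that
\[
\Prob{T_\text{couple}>4^{-n}}\leq 4^{-n}
\]
whenever \(\bigl||X_0|-|\widetilde{X}_0|\bigr|+\bigl|L^{(0)}_0-\widetilde{L}^{(0)}_0\bigr|<\varepsilon_n\) and \(X_0=0\). Stage \(n\) is then declared successful only on the joint event that the approximation error is below \(\varepsilon_n\) \emph{and} the stage completes in time \(4^{-n}\); this joint event has conditional probability at least \(1-2\cdot4^{-n}\). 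The infinite concatenation then succeeds with probability at least \(1-\tfrac14-\sum_{n\geq2}2\cdot4^{-n}=\tfrac{7}{12}\), and on failure the whole procedure is restarted. Your proposal can be repaired by replacing the expectation-summability argument with this scaling-plus-restart mechanism; the rest of your outline (including the continuity argument at \(\tau\)) then goes through.

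Two smaller remarks. First, Theorem \ref{thm:rates} is stated only for the special initial configuration \(S_0=B_0>\widetilde{B}_0=\widetilde{S}_0\), which is not what you face at the start of stage \(n+1\) (there \(|\hat{X}|\) is merely close to zero, not zero), so even were the mean finite you would need the general-case scaling, not the special-case formula. Second, your proposed handling of the singular case \(S_0=\widetilde{S}_0\) by a preliminary synchronous segment is unnecessary here: the reflection / synchronized coupling is almost surely successful regardless of singularity (only its \emph{optimality} is in doubt there), and the scaling argument for \eqref{eq:couple-upper-bound} does not rely on non-singularity.
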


The proof makes it plain that the coupling time for reflection/synchronized coupling can be approximated arbitrarily well in distribution
by the coupling times for suitable {equi-filtration} couplings.

\begin{pf*}{Proof of Theorem~\ref{thm:isofiltration-coupling}}
It suffices to show that there is a sequence $\varepsilon_n\to0$
such that the following holds for all reflection/synchronized couplings:
if $\llvert |X_0|-|\widetilde{X}_0|\rrvert +\llvert L^{(0)}_0-\widetilde{L}^{(0)}_0\rrvert <\varepsilon_n$ and $
X_0=0$ then
%
\begin{equation}
\label{eq:couple-upper-bound} \Prob\bigl[T_{\mathrm{couple}}>4^{-n}\bigr]
\leq4^{-n} .
\end{equation}

For then we may construct a successful {equi-filtration} coupling as
the concatenation of a series of {equi-filtration} couplings.
In the first stage, Lemma~\ref{lem:isofiltration-approx} can be used
to select an {equi-filtration} coupling producing $(\hat{X},\hat
{L}^{(0)})$, which approximates a reflection/synchronized coupling
producing $(\widetilde{X},\widetilde{L}^{(0)})$
(continued up to but not including time $T_2^{(1)}$, the end of the
synchronized stage) such that
we have the following control on the left-limits $\hat
{X}_{T_2^{(1)}-}$, $\widetilde{X}_{T_2^{(1)}-}$, $\hat
{L}^{(0)}_{T_2^{(1)}-}$ and $\widetilde{L}^{(0)}_{T_2^{(1)}-}$:
\[
\Prob\bigl[\bigl\vert |\hat{X}_{T_2^{(1)}-}|-|\widetilde {X}_{T_2^{(1)}-}|
\bigr\vert + \bigl\llvert \hat {L}^{(0)}_{T_2^{(1)}-}-\widetilde{L}^{(0)}_{T_2^{(1)}-}
\bigr\rrvert > \varepsilon_1\bigr] < 4^{-1} .
\]
Moreover, it follows from the construction of the reflection/synchronized coupling that\linebreak[4]  $|{X}_{T_2^{(1)}-}|=|\widetilde
{X}_{T_2^{(1)}-}|=0$
while ${L}^{(0)}_{T_2^{(1)}-}=\widetilde{L}^{(0)}_{T_2^{(1)}-}$.

At time $T_2^{(1)}$, $\widetilde{X}$ makes a small jump so that $
\widetilde{X}_{T_2^{(1)}}=\hat{X}_{T_2^{(1)}-}$, while $X$, $
L^{(0)}$, $\widetilde{L}^{(0)}$, $\hat{X}$ and $\hat
{L}^{(0)}$ trajectories remain continuous. The second and further
stages are implemented by repeating the construction.

To be explicit, the construction continues through further stages $
n=2, 3, \ldots\,$, such that at the end $T_2^{(n)}-$ of stage $n$,
conditional on successful fulfilment of all previous stages, we have
$|{X}_{T_2^{(n)}-}|=|\widetilde{X}_{T_2^{(n)}-}|=0$ and $
{L}^{(0)}_{T_2^{(n)}-}=\widetilde{L}^{(0)}_{T_2^{(n)}-}$, moreover if
$n\geq2$ then $T_2^{(n)}-T_2^{(n-1)}<4^{-n}$,
and
%
\begin{equation}
\label{eq:between-stages} \Prob\bigl[\bigl\vert |\hat{X}_{T_2^{(n)}-}|-|\widetilde
{X}_{T_2^{(n)}-}|\bigr\vert + \bigl\llvert \hat {L}^{(0)}_{T_2^{(n)}-}-
\widetilde{L}^{(0)}_{T_2^{(n)}-}\bigr\rrvert > \varepsilon_n
\bigr] < 4^{-n} .
\end{equation}
(We suppress the conditioning on previous stages for the sake of simple
notation.)
Stage $n$ is implemented
(a) by using a reflection/synchronized coupling of $(|\widetilde
{X}|, \widetilde{L}^{(0)})$ with $(|X|, L^{(0)})$ which
succeeds before time $4^{-n}$ (this has probability $1-4^{-n}$),
and also (b) by
invoking Lemma~\ref{lem:isofiltration-approx} to continue $(\hat
{X},\hat{L}^{(0)})$ by
an {equi-filtration} coupling such that the maximum difference over
this stage
between the immersed coupling component $(|\widetilde{X}|, \widetilde
{L}^{(0)})$ and the {equi-filtration} coupling component $(|\hat
{X}|,\hat{L}^{(0)})$ is less than $\varepsilon_{n}$
(with conditional probability at least $1-4^{-n}$).
It follows that the conditional probability of the $n$th stage ($
n\geq2$) completing successfully is at least $1-2\times4^{-n}$.
At the end of stage $n,$ we impose a small jump on $\widetilde{X}$
so that $\widetilde{X}_{T_2^{(n)}}=\hat{X}_{T_2^{(n)}-}$.


Thus, with total probability at least
\[
1- 4^{-1} - \sum_{n=2}^\infty2
\times4^{-n} = \frac{7}{12} ,
\]
for all $n$,
stage $n$ is of duration less than $4^{-n}$,
and at time $T_2^{(n)}$
we have $X_{T_2^{(n)}-}=\widetilde{X}_{T_2^{(n)}-}=0$, $
L^{(0)}_{T_2^{(n)}-}=L^{(0)}_{T_2^{(n)}-}$, and
$(X_{T_2^{(n)}-}, L^{(0)}_{T_2^{(n)}-})$ can be approximated by the
{equi-filtration} coupling
component $(\hat{X}_{T_2^{(n)}-}, \hat{L}^{(0)}_{T_2^{(n)}-})$ with
sum of absolute differences less than $\varepsilon_{n}$.

It follows that the bound \eqref{eq:couple-upper-bound} can be used
together with Lemma~\ref{lem:isofiltration-approx} to construct an
{equi-filtration} coupling which has probability at least $\tfrac
{7}{12}$ of succeeding in finite time. In case of default at any
stage, one can
then restart the sequence of couplings, so successful immersed coupling
is almost sure to happen.

We now need to show that we can choose a sequence of $\varepsilon_n$
to satisfy the bound \eqref{eq:couple-upper-bound}.
We refer to $(B,S)$ coordinates.
From $X_0=0,$ we obtain $S_{0}=B_{0}$ while $\llvert |X_{0}|-|\hat
{X}_{0}|\rrvert =\hat{S}_{0}-\hat{B}_{0}$
and $\llvert L^{(0)}_{0}-\hat{L}^{(0)}_{0}\rrvert =\llvert S_{0}-\hat
{S}_{0}\rrvert $.
Suppose that $\llvert |X_{0}|-|\hat{X}_{0}|\rrvert <\varepsilon_n$
and $\llvert L^{(0)}_{0}-\hat{L}^{(0)}_{0}\rrvert <\varepsilon_n$.
Simple coupling arguments now show that the time taken to achieve
reflection/synchronized coupling from such starting points is
maximized if
\begin{eqnarray*}
L^{(0)}_{0}-\hat{L}^{(0)}_{0} &=&
S_{0}-\hat{S}_{0} = \varepsilon_n ,
\\
\bigl\vert |X_{0}|-|\hat{X}_{0}|\bigr\vert &=&
\hat{S}_{0}-\hat{B}_{0} = \varepsilon_n .
\end{eqnarray*}
But with such initial conditions, we can apply a scaling argument to
show that $T_2/\varepsilon^2$ has a distribution not depending on $
\varepsilon_n$.
It therefore follows that we can choose $\varepsilon_n$ to ensure
that \eqref{eq:couple-upper-bound} holds whenever
$\llvert |X_0|-|\widetilde{X}_0|\rrvert +\llvert L^{(0)}_0-\widetilde
{L}^{(0)}_0\rrvert <\varepsilon_n$ and $X_0=0$.
%
%
\end{pf*}


The following corollary will be of assistance when constructing
{equi-filtration} couplings of BKR diffusions in the next section.

\begin{cor}\label{cor:finite-range}
The {equi-filtration} coupling of Theorem~\ref
{thm:isofiltration-coupling} can be localized in the following sense:
for fixed $\delta>0$, for all sufficiently small $\varepsilon>0$
and for all pairs of initial conditions
$(X_0,L^{(0)}_0)$ and $(\widetilde{X}_0,\widetilde{L}^{(0)}_0)$ with
\[
\bigl\vert |X_0|-|\widetilde{X}_0|\bigr\vert +\bigl\llvert
L^{(0)}_0-\widetilde {L}^{(0)}_0\bigr
\rrvert <\varepsilon,
\]
we can construct a successful {equi-filtration} coupling of $
(X,L^{(0)})$ and $(\widetilde{X},\widetilde{L}^{(0)})$
such that
\[
\Prob\bigl[\mbox{one of } \bigl(X,L^{(0)}\bigr) \mbox{ and } \bigl(
\widetilde {X},\widetilde{L}^{(0)}\bigr) \mbox{ does not stay within }
\ball\bigl(\bigl(X_0,L^{(0)}_0\bigr), \delta
\bigr)\bigr] < \varepsilon.
\]
\end{cor}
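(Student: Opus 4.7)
The plan is to combine the Brownian scaling that is implicit in Theorem \ref{thm:isofiltration-coupling} with elementary tail bounds for Brownian maxima, so that with an initial difference of size \(\varepsilon\) both the total coupling time and the spatial excursions of the two processes scale as powers of \(\varepsilon\).

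First I would revisit the cascade of stages used in the proof of Theorem \ref{thm:isofiltration-coupling}. The tolerances \(\varepsilon_{n}\) were chosen there so that an initial difference below \(\varepsilon_{n}\) guarantees completion of a stage-\(n\) reflection / synchronized coupling within time \(4^{-n}\), with failure probability at most \(4^{-n}\). The concluding scaling argument in that proof (exploiting the fact that \(T_{2}/\varepsilon^{2}\) has a fixed law) forces \(\varepsilon_{n}=O(2^{-n})\). Consequently, if the initial configuration has separation of size \(\varepsilon\), one can enter the cascade at the level \(n_{\varepsilon}=\lceil\log_{2}(1/\varepsilon)\rceil\); the total time through all subsequent stages is bounded by \(\sum_{k\geq n_{\varepsilon}}4^{-k}=O(\varepsilon^{2})\), while the cumulative failure probability is likewise \(O(\varepsilon^{2})\).

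Second, on the event that the cascade completes within time \(C\varepsilon^{2}\), I would use the Reflection Principle to bound the maximal displacement of \(X\) (and likewise of \(\widetilde{X}\)) by a quantity of order \(\varepsilon\sqrt{C\log(1/\varepsilon)}\), with probability at least \(1-\varepsilon/2\). The associated local times \(L^{(0)}\) and \(\widetilde{L}^{(0)}\) are continuous and monotone, and by Tanaka's formula their total increments over the window are dominated by the running suprema of \(|X|\) and \(|\widetilde{X}|\); hence they satisfy the same order of magnitude bound. Choosing \(\varepsilon\) sufficiently small relative to \(\delta\) then ensures the required localization inside \(\ball((X_{0},L^{(0)}_{0}),\delta)\).

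The principal obstacle is the first step: one must verify that the iterative construction of Theorem \ref{thm:isofiltration-coupling} can be initiated at an arbitrary level \(n_{\varepsilon}\) rather than at \(n=1\) while preserving the uniform bounds on duration, residual difference, and success probability. This requires combining the estimates of Lemma \ref{lem:isofiltration-approx} for the equi-filtration approximation error with the Brownian scaling for the reflection / synchronized coupling times, and checking that constants do not blow up in the process. Once this bookkeeping is in place, the remaining Brownian tail estimates are routine.
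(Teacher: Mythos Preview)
Your approach is essentially the same as the paper's, which is even terser: the paper simply observes that the proof of Theorem~\ref{thm:isofiltration-coupling} yields arbitrarily short coupling times with arbitrarily high probability as the initial discrepancy shrinks, and then appeals to continuity of Brownian motion and local time to convert short time into small spatial displacement. Your version makes this explicit by tracking the cascade level \(n_\varepsilon\) and invoking the Reflection Principle and Tanaka's formula directly, which is a legitimate (and arguably more informative) way to flesh out the same idea.

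One minor quantitative caution: the claim \(\varepsilon_n=O(2^{-n})\) is too optimistic. The coupling time \(T_2\) inherits the heavy \(t^{-1/2}\) tail of Brownian first-passage times, so the requirement \(\Prob{T_2>4^{-n}}\leq 4^{-n}\) together with the scaling \(T_2/\varepsilon^2\overset{d}{=}\text{const}\) forces \(\varepsilon_n\) to decay faster than \(2^{-n}\) (of order \(8^{-n}\), in fact). Consequently the total time bound you obtain is a smaller positive power of \(\varepsilon\) than \(\varepsilon^2\). This does not damage the argument, since any polynomial decay suffices for the localization, but you should adjust the bookkeeping accordingly.
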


\begin{pf}
From the proof of Theorem~\ref{thm:isofiltration-coupling}, it follows
that for sufficiently small $\varepsilon$ we can obtain
uniformly arbitrarily small probability of the {equi-filtration}
coupling failing to couple within an arbitrarily small period of time.
The corollary then follows by observing that it follows from continuity
of Brownian motion and Brownian local time that over a sufficiently
small period of time we can ensure that the probability of there being
large deviations either of motion or of local time is arbitrarily small.
\end{pf}

\section{Application to BKR diffusions}\label{sec:BKR}
The BKR diffusion was introduced by Bene\v{s}, Karatzas and Rishel \cite{BenesKaratzasRishel-1991}
as part of an investigation into a control problem
which possesses no strict-sense optimal law: it is a two-dimensional
diffusion $(X,Y)$ for which $X$ and $Y$ are Brownian motions
connected by
%
\begin{equation}
\label{eq:BKR-relation} \sgn(X)\,\d X + \sgn(Y)\,\d Y = 0 .
\end{equation}
Essentially $(X,Y)$ diffuses linearly in a Brownian fashion
along the boundary of a square $\{(x,y)\dvt  |x|+|y|=\ell\}$, except
that $\ell$ increases according to a local time driving term whenever
$(X,Y)$ visits one of the vertices of the square.
\'{E}mery \cite{Emery-2009} studied filtration
questions concerning this
process, and in particular showed that the natural filtration of $
(X,Y)$ is Brownian even when $(X,Y)$ is started from
the origin. In this connection, he asked a specific question
(\'{E}mery \cite{Emery-2009}, Question), which can be stated
concisely as follows: given
two initial points $(X_0,Y_0)$ and $(\widetilde{X}_0,\widetilde
{Y}_0)$,
neither of which is the origin, is there an almost surely successful
{equi-filtration} coupling of BKR diffusions $(X,Y)$ and $
(\widetilde{X},\widetilde{Y})$ started from these two points?
An affirmative answer would lead to a constructive proof of Brownianity
of the filtration of $(X,Y)$ started from the origin.
An immersed coupling is exhibited in \'{E}mery \cite
{Emery-2009}, Lemma~5;
however {equi-filtration} is required for the purposes of obtaining a
constructive proof of the filtration result.\issue{Understand the relevant filtration theory}

\subsection{Sketch construction of immersed coupling for the BKR
diffusion}\label{sec:immersed-BKR}
The work of Sections~\ref{sec:brownian-local-time}, \ref
{sec:equi-filtration} suggests a direct strategy for constructing
successful {equi-filtration} couplings of BKR diffusions;
start with an almost-surely successful immersed coupling, and then
perturb it to produce a nearly-successful coupling which can then be
iterated to generate a successful coupling
following the methods used to prove Theorem~\ref
{thm:isofiltration-coupling} and associated lemmas.
The immersed coupling described in \'{E}mery \cite
{Emery-2009}, Lemma~5, bears a
strong family resemblance to the reflection/synchronized coupling
given above (Example~\ref{coupling:reflection/synchronized}): in
preparation for construction of the {equi-filtration} coupling
we first sketch a description of \'{E}mery's immersed coupling
for BKR diffusions using the terminology of our paper.

Application of the Tanaka formula for Brownian local time to \eqref
{eq:BKR-relation} shows that
$|X|+|Y|=L^{X;(0)}+L^{Y;(0)}$ increases as the sum of the local times
accumulated by $X$ and $Y$ at $0$.
In the case when $(X,Y)$ does not begin at the origin, so that $
h=\tfrac{1}{2}(|X_0|+|Y_0|)>0$, we can
determine a real Brownian motion which drives the BKR diffusion by
first defining a binary c\'adl\'ag switch process $K$, which
takes values $0$ or $1$, changing only according to the following rules:
\begin{enumerate}[2.]
\item[1.]$K$ takes value $1$ on entry to the region $|X|<h$;
\item[2.]$K$ takes value $0$ on entry to the region $|Y|<h$;
\end{enumerate}
and otherwise $K$ is time-constant.
The initial value of $K$ is given by
\[
K_0 = \cases{ 1 & \quad \mbox{if} $|X_0|\leq h$,
\cr
0 & \quad \mbox{otherwise}. }
\]
This construction
is illustrated in Figure \ref{fig:regions}.

We can then define a real-valued Brownian motion $A$ by
%
\begin{equation}
\label{eq:driving-BM} \d A = K \sgn(Y)\,\d X - (1-K)\sgn(X)\,\d Y .
\end{equation}
Note that the definition of $K$ implies that $Y$ never vanishes
when $K=1$, and $X$ never vanishes
when $K=0$. This allows us to use $A$ to construct $(X, Y)$ as follows:
%
\begin{eqnarray}\label{sec:BKR-definition}
\d X &=& K\sgn(Y)\,\d A - (1-K)\sgn(X)\sgn(Y)\,\d Y ,
\nonumber
\\[-8pt]\\[-8pt]
\d Y &=& -K\sgn(X)\sgn(Y)\,\d X - (1-K)\sgn(X)\,\d A .\nonumber
\end{eqnarray}
Thus if $K=1,$ then we can construct $X$ in terms of $A$ (since $
Y$ does not then change sign) and then $Y$ in terms of $X$,
and conversely if $K=0,$ then we can construct $Y$ in terms of $
A$ and then $X$ in terms of $Y$.
In particular, it is convenient to note the following relationships
between $X$, $Y$ and the driving Brownian motion $A$:
%
\begin{eqnarray}
\label{eq:BKR-X-defn}\d X &=& \sgn(Y)\,\d A ,
\\
\label{eq:BKR-Y-defn}\d Y &=& -\sgn(X)\,\d A .
\end{eqnarray}

%
\begin{figure}

\includegraphics{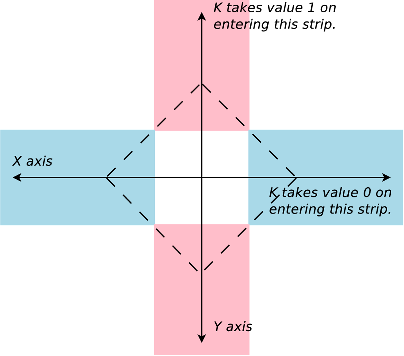}

\caption{The construction of a BKR diffusion $(X, Y)$ from a real
Brownian motion can be based on a binary c\'adl\'ag
process $K$, which switches when it enters specific regions with
boundaries determined by
$x,y=\pm h$ for $h=\tfrac{1}{2}(|X_0|+|Y_0|)$.
The initial value $(X_0,Y_0)$ lies on the boundary of the
diamond-shaped region, and
by construction the diffusion will never enter the interior of this region.}
\label{fig:regions}
\end{figure}

We can now sketch the construction of an immersed coupling with another
BKR diffusion
which also does not begin
at the origin. Let $(\widetilde{X},\widetilde{Y})$ be the coupled
BKR diffusion,
based on $\widetilde{h}=\tfrac{1}{2}(|\widetilde{X}_0|+|\widetilde
{Y}_0|)>0$,
with $\widetilde{K}$ and $\widetilde{A}$
defined in direct analogy to $K$ and $A$.
We define the coupling (a variant of the reflection/synchronized
coupling described in Definition~\ref
{coupling:reflection/synchronized}) by the following definition.

\begin{defn}[(Variant reflection/synchronized coupling)]\label
{coupling:BKR-immersed-coupling}
With the above notation, two BKR diffusions
$({X},{Y})$ and
$(\widetilde{X},\widetilde{Y})$ (adapted to the same filtration)
are said to be
in \emph{reflection/synchronized coupling} if their driving Brownian
motions $A$ and $\widetilde{A}$ are related by
%
\begin{equation}
\label{eq:BKR-immersed-coupling} \d\widetilde{A} = \sgn(\widetilde{Y}) J \sgn(Y)\,\d A ,
\end{equation}
where $J=-1$ until $X$ and $\widetilde{X}$ first meet, after
which we set $J=+1$. (Thus $\widetilde{X}$ is a reflection of $
X$ till $X=\widetilde{X}$.)
\end{defn}

Note that this variant coupling does not treat $X$ and $Y$ symmetrically.
As discussed in detail by \'{E}mery \cite{Emery-2009},
the variant reflection/synchronized coupling can be represented as an immersed coupling and is
almost surely successful.
Given $(X,Y)$, in order to reconstruct $(\widetilde{X},\widetilde
{Y})$
it is necessary to augment the filtration using an
appropriate independent sequence of equiprobable $\pm1$ random
variables; this is because it
is not possible to obtain strong solutions to the stochastic differential
system given by \eqref{eq:BKR-immersed-coupling}, \eqref
{sec:BKR-definition}, and the analogue of \eqref{sec:BKR-definition}
giving $(\widetilde{X},\widetilde{Y})$ in terms of $\widetilde
{A}$.
Note however that we \emph{can} construct the paths of $\widetilde
{X}$ and $|\widetilde{Y}|$ from the path of $(X,Y)$: indeed

%
\begin{lem}\label{lem:BKR-reconstruction}
If $(X,Y)$ and $(\widetilde{X},\widetilde{Y})$ are connected by the
variant reflection/synchronized coupling specified in Definition~\ref
{coupling:BKR-immersed-coupling} then $\widetilde{X}$ and $
|\widetilde{Y}|$
are both adapted to the natural filtration of $X$, and $T_1=\inf\{
t\dvt X_t=\widetilde{X}_t\}$,
and the coupling time $T_{\mathrm{couple}}=T_2=\inf\{
t>T_1\dvt |Y_t|=|\widetilde{Y}_t|=0\}$
are stopping times for this filtration. In particular $T_{\mathrm
{couple}}$
is almost surely finite.
\end{lem}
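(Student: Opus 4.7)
The plan is to reconstruct $\widetilde{X}$ from $X$ directly, then recover $|\widetilde{Y}|$ from $\widetilde{X}$ by a Skorokhod-type identification, and finally deduce finiteness of $T_2$ from Brownian recurrence.

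First, combining the coupling relation \eqref{eq:BKR-immersed-coupling} with the defining identities $\d X = \sgn(Y)\d A$ and $\d\widetilde{X} = \sgn(\widetilde{Y})\d\widetilde{A}$ yields, using $\sgn(\widetilde{Y})^2 = 1$ almost everywhere, the clean equation $\d\widetilde{X} = J\,\d X$. Since $J \equiv -1$ up to $T_1$, integration gives $\widetilde{X}_t = \widetilde{X}_0 + X_0 - X_t$ on $[0,T_1]$, so that $T_1 = \inf\{t : X_t = \tfrac12(X_0+\widetilde{X}_0)\}$ is an $\mathcal{F}^X$-hitting time. After $T_1$, $J \equiv +1$ forces $\widetilde{X}_t = X_t$. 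Thus $\widetilde{X}$ is adapted to the natural filtration of $X$ and $T_1$ is a stopping time for it.

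Next, apply Tanaka's formula to the BKR relation \eqref{eq:BKR-relation} to obtain
\[
|\widetilde{X}_t| + |\widetilde{Y}_t| \;=\; 2\widetilde{h} + \widetilde{L}^{X;(0)}_t + \widetilde{L}^{Y;(0)}_t\,.
\]
Set $\widetilde{W}_t = |\widetilde{X}_t| - \widetilde{L}^{X;(0)}_t$; by Tanaka $\d\widetilde{W} = \sgn(\widetilde{X})\,\d\widetilde{X}$, a continuous local martingale of quadratic variation $t$, hence by L\'evy's theorem a Brownian motion starting from $|\widetilde{X}_0|$, and manifestly a pathwise functional of $\widetilde{X}$. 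Rewriting the displayed identity as $|\widetilde{Y}_t| = (2\widetilde{h} + \widetilde{L}^{Y;(0)}_t) - \widetilde{W}_t$ exhibits $\widetilde{L}^{Y;(0)}$ as a non-decreasing process starting at zero, keeping $|\widetilde{Y}| \geq 0$, and growing only on the zero set of $|\widetilde{Y}|$. The Skorokhod lemma identifies the unique such process as
\[
\widetilde{L}^{Y;(0)}_t \;=\; \bigl(\sup_{s \leq t}\widetilde{W}_s - 2\widetilde{h}\bigr)_+\,,
\]
so $\widetilde{L}^{Y;(0)}$, and hence $|\widetilde{Y}_t| = \max(2\widetilde{h},\sup_{s \leq t}\widetilde{W}_s) - \widetilde{W}_t$, is a measurable pathwise functional of $\widetilde{X}$, therefore of $X$. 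Applying the same argument to the untilded process shows that $|Y|$ is likewise $\mathcal{F}^X$-adapted, so $T_2 = \inf\{t > T_1 : |Y_t| = |\widetilde{Y}_t| = 0\}$ is an $\mathcal{F}^X$-stopping time.

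For almost-sure finiteness, note that after $T_1$ the identity $X \equiv \widetilde{X}$ makes $W_t - \widetilde{W}_t$ equal to the constant $c = \widetilde{L}^{X;(0)}_{T_1} - L^{X;(0)}_{T_1}$. By the formulae above, $|Y_t|$ and $|\widetilde{Y}_t|$ vanish precisely when $W_t$ coincides with $\sup_{s \leq t} W_s$ and this supremum is at least $2h$ (respectively $2\widetilde{h} + c$). By recurrence of the Brownian motion $W$ there almost surely exists $t > T_1$ at which $W_t = \sup_{s \leq t} W_s \geq \max(2h, 2\widetilde{h}+c)$, and any such instant witnesses $|Y_t| = |\widetilde{Y}_t| = 0$, yielding $T_2 < \infty$ almost surely. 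The principal subtlety lies in step two, namely the Skorokhod identification: one must verify that $\widetilde{L}^{Y;(0)}$ grows only on the zero set of $|\widetilde{Y}|$ (immediate from the definition of Brownian local time at $0$) and invoke the standard Skorokhod lemma for uniqueness; everything else is routine semimartingale calculus combined with Brownian recurrence.
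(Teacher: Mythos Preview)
Your argument is correct and, for the first step (adaptedness of \(\widetilde{X}\) and identification of \(T_1\)), coincides with the paper's. For the adaptedness of \(|\widetilde{Y}|\), however, you take a genuinely different route. The paper computes the stochastic differential directly:
\[
\sgn(\widetilde{Y})\,\d\widetilde{Y}\;=\;-\sgn(\widetilde{X})\,J\,\sgn(Y)\,\d A\;=\;-\sgn(\widetilde{X})\,J\,\d X,
\]
so that \(\int_0^t\sgn(\widetilde{Y})\,\d\widetilde{Y}\) is visibly an \(\mathcal{F}^X\)-adapted It\^o integral, and then invokes the L\'evy transform. You instead recover \(|\widetilde{Y}|\) as a \emph{pathwise} functional of \(\widetilde{X}\) via the Skorokhod reflection identity \(|\widetilde{Y}_t|=\max(2\widetilde{h},\sup_{s\le t}\widetilde{W}_s)-\widetilde{W}_t\), with \(\widetilde{W}=|\widetilde{X}|-\widetilde{L}^{X;(0)}\). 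The two are essentially dual (note \(\d\widetilde{W}=\sgn(\widetilde{X})\d\widetilde{X}=-\sgn(\widetilde{Y})\d\widetilde{Y}\) by the BKR relation), but your formulation gives a cleaner explicit formula, while the paper's differential identity \(\sgn(\widetilde{Y})\d\widetilde{Y}=-\sgn(\widetilde{X})J\d X\) is exactly what is reused downstream in the approximation Lemma~\ref{lem:BKR-approximation}.

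One small imprecision in your finiteness step: after \(T_1\) you have \(\widetilde{W}_t=W_t-c\), but \(\sup_{s\le t}\widetilde{W}_s\) also involves the pre-\(T_1\) history, so \(|\widetilde{Y}_t|=0\) requires not only \(W_t-c\ge 2\widetilde{h}\) but also \(W_t-c\ge\sup_{s\le T_1}\widetilde{W}_s\). This is harmless---replace \(\max(2h,\,2\widetilde{h}+c)\) by \(\max(2h,\,2\widetilde{h}+c,\,\sup_{s\le T_1}\widetilde{W}_s+c)\) and Brownian recurrence still gives \(T_2<\infty\) almost surely---but as written the ``any such instant'' claim is slightly too strong.
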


\begin{pf}
For the case of $\widetilde{X}$, observe that
$T_1$ is the first time $t$ that $\widetilde{X}_0-(X_{t}-X_0)$
hits $\tfrac{1}{2}(X_0+\widetilde{X}_0)$,
and therefore $T_1$ is a stopping time for the natural filtration of
$X$. Moreover,
\[
\widetilde{X} = \bigl(\widetilde{X}_0-(X_{t\wedge T_1}-X_0)
\bigr)+(X_{t\vee
T_1}-X_{T_1}) ;
\]
hence $\widetilde{X}$ is also adapted to the natural filtration of $
X$.\vspace*{1pt}

For the case of $|\widetilde{Y}|$, observe that
$|\widetilde{Y}|$ satisfies
\[
\d|\widetilde{Y}| = \sgn(\widetilde{Y})\,\d\widetilde{Y} + \d L^{\widetilde{Y};(0)} ,
\]
where $L^{\widetilde{Y};(0)}$ is the local time accumulated by $
\widetilde{Y}$ at $0$,
and we can use the L\'evy transform
to show that it suffices to establish that $\int_0^t\sgn(\widetilde
{Y})\,\d\widetilde{Y}$ is measurable with respect to the natural
filtration of $X$. But
we can employ the stochastic differential equation \eqref
{eq:BKR-immersed-coupling} determining the coupling, together with
\eqref{eq:BKR-Y-defn};
\begin{eqnarray*}
\sgn(\widetilde{Y})\,\d\widetilde{Y}& =& -\sgn(\widetilde{X})\sgn(\widetilde{Y})\,\d
\widetilde{A} = -\sgn(\widetilde{X})\sgn(\widetilde{Y}) \sgn(\widetilde{Y}) J
\sgn(Y)\,\d A
\\
&=& -\sgn(\widetilde{X}) J \sgn(Y)\,\d A .
\end{eqnarray*}
Thus, we can deduce that $\int_0^t\sgn(\widetilde{Y})\,\d\widetilde
{Y}$ is given by a Brownian stochastic integral adapted to the natural
filtration of $X$,
since $\d X=\sgn(Y)\,\d A$ by \eqref{eq:BKR-X-defn}, and we have
already shown that $\widetilde{X}$ is so adapted.

Moreover after $T_1$ we have $J\equiv1$ and $X\equiv\widetilde
{X}$, hence
\begin{eqnarray*}
\d|\widetilde{Y}| - \d|Y|& =& \sgn(\widetilde{Y})\,\d\widetilde{Y} - \sgn(Y)\,\d Y +
\d L^{\widetilde{Y}:(0)} -\d L^{{Y}:(0)}
\\
&=& -\sgn(\widetilde{X})\sgn(Y)\,\d A - \sgn(Y)\,\d Y + \d L^{\widetilde
{Y}:(0)} -\d
L^{{Y}:(0)}
\\
&=& -\sgn({X})\sgn(Y)\,\d A - \sgn(Y)\,\d Y + \d L^{\widetilde{Y}:(0)} -\d L^{{Y}:(0)}
= \d L^{\widetilde{Y}:(0)} -\d L^{{Y}:(0)} ,
\end{eqnarray*}
where we use \eqref{eq:BKR-Y-defn} to cancel the two Brownian
stochastic differentials.
Thus after $T_1$ it is the case that either $|Y|\geq|\widetilde
{Y}|$ for all time, or $|Y|\leq|\widetilde{Y}|$ for all time,
and coupling occurs at the first time $T_2$ after $T_1$ that
the coupled reflected Brownian motions $|Y|$ and $|\widetilde{Y}|$
simultaneously hit $0$.

Now $|Y|$ hits zero when $|X|$ first hits $|X_0|+|Y_0|$, or at
subsequent times when $|X|$ attains its running supremum,
while we have already shown that $|\widetilde{Y}|$ is adapted to the
natural filtration of $X$, therefore $T_2$
is a stopping time for this filtration.

Finally, almost sure finiteness of $T_{\mathrm{couple}}$ follows,
since it is the (dependent) sum of two almost surely finite Brownian
hitting times $T_1$ and $T_2-T_1$.
\end{pf}

We shall use this partial reconstruction when analyzing the
{equi-filtration} coupling described below.


\subsection{An {equi-filtration} coupling for BKR diffusions}\label
{sec:equi-filtration-BKR}
In order to construct an {equi-filtration} coupling for BKR diffusions
$(X, Y)$ and $(\widetilde{X}, \widetilde{Y})$,
with neither BKR diffusion starting at the origin, we adopt the
strategy of Section~\ref{sec:equi-filtration}.\issue{Check details
for {equi-filtration} coupling for BKR diffusion}
Given a delayed time-change $\sigma(t)$ defined in terms of a
positive continuous non-increasing function $\psi$ as in \eqref{eq:delay},
we can define a new driving Brownian motion $\hat{A}$ in terms of $
A$ via
%
\begin{equation}
\label{eq:isofiltration-BKR} \d\hat{A}_t = \sgn(\hat{Y}_{\sigma(t)})
J_t \sgn(Y_{\sigma(t)}) \,\d A .
\end{equation}
Here $X$ (and $Y$) are defined in terms of $A$ using \eqref
{sec:BKR-definition};
we note that $J$ is the \emph{immersed} control given in the
previous subsection, constructed in terms of $(X,Y)$
by setting $J=-1$ till the time $T_1$ when $X$ first hits $
\tfrac{1}{2}(X_0+\widetilde{X}_0)$,
and then setting $J=+1$; finally, $\hat{Y}$ (and $\hat{X}$)
are defined in terms of $\hat{A}$ using the analogue of \eqref
{sec:BKR-definition}. The use of the delay $\sigma$ means that
the system of these
stochastic differential equations
has a unique strong solution so long as neither BKR diffusion is begun
at the origin.
Note that there are issues in finding strong solutions to \eqref
{sec:BKR-definition} together with the switching processes
$K$ and $\hat{K}$
if either or both of the BKR diffusions start at the origin.

%
\begin{lem}\label{lem:isofiltration-BKR}
Suppose $(\hat{X}, \hat{Y})$ is a BKR diffusion defined in terms of
a BKR diffusion $(X,Y)$ using the time-delayed stochastic
differential equation \eqref{eq:isofiltration-BKR} and
the analogues of the defining equation \eqref{sec:BKR-definition}
together with switching processes $K$ and $\hat{K}$, so that
%
\begin{eqnarray}
\label{eq:BKR-hat-X-defn}\d\hat{X} &=& \sgn(\hat{Y})\,\d\hat{A} ,
\\
\label{eq:BKR-hat-Y-defn}\d\hat{Y} &=& -\sgn(\hat{X})\,\d\hat{A} .
\end{eqnarray}
If neither BKR diffusion is begun at the origin, then the resulting
coupling is
{equi-filtration}.
\end{lem}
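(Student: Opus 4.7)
The plan is to mirror the proof of Lemma \ref{lem:isofiltration}, with the additional observation that for each BKR diffusion the natural filtration of the process coincides with that of its driving Brownian motion. Since neither \((X,Y)\) nor \((\hat X,\hat Y)\) is begun at the origin, each of the switching processes \(K\), \(\hat K\) is well defined from the path of the corresponding diffusion, and the construction \eqref{eq:driving-BM}--\eqref{sec:BKR-definition} (together with its analogue for \((\hat X,\hat Y)\)) gives a bijective measurable correspondence between paths of the diffusion and of the driving Brownian motion. Thus it suffices to show that \(A\) and \(\hat A\) share a common natural filtration.

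The inclusion of the natural filtration of \(\hat A\) in that of \(A\) is straightforward: the time-delayed SDE \eqref{eq:isofiltration-BKR} has a unique strong solution built step by step, since positivity of \(\psi\) together with the stepwise construction of \((\hat X, \hat Y)\) from \(\hat A\) via \eqref{eq:BKR-hat-X-defn}--\eqref{eq:BKR-hat-Y-defn} ensures that all integrands on the right-hand side are measurable with respect to \(\mathcal{F}^A_{\sigma(t)}\subseteq\mathcal{F}^A_t\), and the control \(J\) is itself a functional of the path of \(X\).

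For the reverse inclusion, exploit the fact that \(\sgn(\hat Y_{\sigma(t)})\), \(\sgn(Y_{\sigma(t)})\), and \(J_t\) all take values in \(\{\pm 1\}\) to invert \eqref{eq:isofiltration-BKR} as
\[
 \d A_t \quad=\quad \sgn(Y_{\sigma(t)}) \; J_t \; \sgn(\hat Y_{\sigma(t)}) \d \hat A_t\,.
\]
Construct \(A\), and simultaneously \(X\), \(Y\), from \(\hat A\) by iterating over small time intervals whose lengths are dictated by \(\psi\). On each such interval the integrand depends only on values at times strictly prior to its start (for the delayed signs) or on the path of \(X\) already reconstructed (for \(J\)); hence the stochastic integral driving \(A\) is adapted to the natural filtration of \(\hat A\). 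In particular, the hitting time \(T_1\) is a stopping time for this filtration, so the switch of \(J\) from \(-1\) to \(+1\) is handled without ambiguity.

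The principal obstacle is book-keeping: one must check that the stepwise reconstruction is internally consistent in the neighbourhood of zero-crossings of \(Y\) and \(\hat Y\), and across the switching times of both \(K\) and \(\hat K\). The convention \(\sgn(0)=1\) adopted after \eqref{eq:time-delayed-sde}, combined with the fact that the delay \(\sigma(t)<t\) evaluates the sign factors at strictly earlier times, resolves the first issue. For the second, the switches of \(K\) and \(\hat K\) are hitting-time events for the respective diffusions, and so fit into the same inductive scheme. The mutual immersion property, and hence the {\equifiltration} property, follows.
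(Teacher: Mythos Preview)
Your proposal is correct and follows essentially the same route as the paper: both proofs invoke Lemma~\ref{lem:isofiltration} as the template, invert the time-delayed SDE \eqref{eq:isofiltration-BKR} using that all sign factors are \(\pm1\), and then argue that the reverse immersion holds by a stepwise reconstruction. Your additional remark that the bijective correspondence between a BKR diffusion and its driving Brownian motion (via the switching construction \eqref{eq:driving-BM}--\eqref{sec:BKR-definition}) reduces the problem to showing \(A\) and \(\hat A\) share a filtration is a clean framing that the paper leaves implicit.

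One small point of comparison: the paper organises the reverse immersion explicitly as a two-phase argument --- first reconstruct \((X,Y)\) up to \(T_1\) using \(J\equiv-1\), deduce that \(T_1\) is a stopping time for the filtration of \((\hat X,\hat Y)\), and then continue with \(J\equiv+1\). Your step-by-step description says \(J\) depends ``on the path of \(X\) already reconstructed'', but strictly speaking \(J_s\) depends on whether \(X\) has hit the target level by time \(s\), not merely before the start of the current small interval; so on the interval containing \(T_1\) there is a potential circularity that the two-phase decomposition resolves cleanly. Your closing sentence about \(T_1\) being a stopping time is gesturing at exactly this, so the substance is there, but making the phase split explicit (as the paper does) removes any doubt.
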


Note that this definition is not symmetrical in $(X,Y)$ and $(\hat
{X}, \hat{Y})$, since the coupling control $J$ is defined in terms
of $(X,Y)$.
Note further that we do not assert that the coupling is successful!

\begin{pf*}{Proof of Lemma~\ref{lem:isofiltration-BKR}}
It follows from construction that $(\hat{X}, \hat{Y})$ is immersed
in the filtration of $(X, Y)$.
On the other hand we can argue as in Lemma~\ref{lem:isofiltration}
that the reverse also holds, and hence that this coupling is {equi-filtration}.
As in Lemma~\ref{lem:isofiltration} of the argument for the case of
Brownian motion with local time,
the key point is to argue first that the trajectory of $(X,Y)$ up to
the time $T_1$ (while $J=-1$) is immersed in the filtration of $
(\hat{X}, \hat{Y})$,
and then to argue that the subsequent construction (while $J=+1$) is
also immersed. The crucial point is that $T_1$ is a stopping time for
the filtration of
$(\hat{X}, \hat{Y})$, as noted in Lemma~\ref{lem:BKR-reconstruction}.
\end{pf*}

Because of Lemma~\ref{lem:BKR-reconstruction},
it makes sense to
discuss $\widetilde{X}_{T_2}$ and $|\widetilde{Y}_{T_2}|$ defined
in terms of $X$ and $Y$, and in particular to
consider the extent to which $\hat{X}_{T_2}$ and $|\hat
{Y}_{T_2}|$ differ from $\widetilde{X}_{T_2}$ and $|\widetilde
{Y}_{T_2}|$.
Moreover $|\widetilde{Y}|_{T_2}=Y_{T_2}=0$, so control of $\llvert |\hat{Y}_{T_2}|-|\widetilde{Y}_{T_2}|\rrvert $ corresponds directly
to control of $|\hat{Y}_{T_2}-\widetilde{Y}_{T_2}|=|\hat
{Y}_{T_2}|$ itself.

%
\begin{lem}\label{lem:BKR-approximation}
Suppose $(\hat{X}, \hat{Y})$ is a BKR diffusion defined in terms of
a BKR diffusion $(X,Y)$ using the time-delayed stochastic
differential equation \eqref{eq:isofiltration-BKR} and
the analogues of the defining equations \eqref{eq:BKR-X-defn} and
\eqref{eq:BKR-Y-defn}. For any $\delta>0$, we can choose $
\varepsilon\in(0,\tfrac12)$ sufficiently small so that if the
time-delay $\sigma(t)= t -(\psi(t)\wedge t)$ is defined \emph{via}
$\psi(t)={\varepsilon^3}/({(t-\varepsilon+1)^3})$ for $t\geq
\varepsilon$ then\vspace*{-1pt}
%
\begin{equation}
\label{eq:BKR-approximation} \Prob\bigl[|\hat{X}_{T_2}-\widetilde{X}_{T_2}|+|
\hat{Y}_{T_2}-\widetilde {Y}_{T_2}|>\delta\bigr] \leq\delta.
\end{equation}
\end{lem}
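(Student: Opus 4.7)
The plan is to adapt the strategy of Lemma \ref{lem:isofiltration-approx} to the BKR setting. Because \(T_2\) is almost surely finite (Lemma \ref{lem:BKR-reconstruction}), I would first fix a truncation time \(T>0\) with \(\Prob{T_2>T}<\delta/3\); it then suffices to bound \(\Prob{\sup_{t\leq T}\{|\widetilde{X}_t-\hat{X}_t|+|\widetilde{Y}_t-\hat{Y}_t|\}>\delta}\) for small enough \(\varepsilon\). Using \eqref{eq:BKR-X-defn}--\eqref{eq:BKR-Y-defn} together with the coupling SDEs \eqref{eq:BKR-immersed-coupling} and \eqref{eq:isofiltration-BKR}, both \(d(\widetilde{X}-\hat{X})\) and \(d(\widetilde{Y}-\hat{Y})\) can be expressed as stochastic integrals against the single driving Brownian motion \(A\), with integrands that are differences of \(\pm1\)-valued sign products (so lying in \(\{-2,0,+2\}\)). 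Doob's \(L^2\) submartingale inequality combined with the \(L^2\) isometry then reduces matters to bounding time integrals of the expected squared integrands.

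For the \(X\)-difference a telescoping substitution of sign factors bounds the integrand by
\[
|\sgn(Y_s)-\sgn(Y_{\sigma(s)})|+|\sgn(\hat{Y}_s)-\sgn(\hat{Y}_{\sigma(s)})|\,,
\]
and since both \(Y\) and \(\hat{Y}\) are Brownian motions, their sign-flip probabilities over \([\sigma(s),s]\) are dominated by those of a standard Brownian motion begun at \(0\); the rotational-symmetry estimate already obtained in Lemma \ref{lem:isofiltration-approx} then delivers \(\Expect{\sup_{t\leq T}(\widetilde{X}_t-\hat{X}_t)^2}=O(\varepsilon)\) for the specified \(\psi\).

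The main obstacle is the \(Y\)-difference: telescoping its integrand unavoidably introduces the cross-terms \(|\sgn(\widetilde{X}_s)-\sgn(\hat{X}_s)|\) and \(|\sgn(\widetilde{Y}_s)-\sgn(\hat{Y}_s)|\), which are not directly controlled by smallness of the delay --- two Brownian motions close in \(L^2\) can still disagree in sign whenever both are near zero. To handle this I would exploit BKR geometry: Tanaka's formula applied to \eqref{eq:BKR-relation} yields \(|\widetilde{X}_t|+|\widetilde{Y}_t|\geq 2\widetilde{h}\) for all \(t\), so at every instant at least one of \(|\widetilde{X}_t|,|\widetilde{Y}_t|\) is bounded below by \(\widetilde{h}\). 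Splitting \(\int_0^T\Prob{\sgn(\widetilde{X}_s)\neq\sgn(\hat{X}_s)}\d s\) according to this dichotomy, the contribution from \(\{|\widetilde{X}_s|\geq\widetilde{h}\}\) forces \(|\widetilde{X}_s-\hat{X}_s|\geq\widetilde{h}\) and is controlled by Markov's inequality from the \(O(\varepsilon)\) bound already obtained; the complementary portion is controlled by Brownian occupation-density estimates for \(\widetilde{X}\) near zero, combined with a bootstrap against the \(Y\)-estimate being derived, and the \(Y\) sign-disagreement term is handled symmetrically using the dichotomy on \(|\widetilde{Y}_s|\). The resulting bound \(\Expect{\sup_{t\leq T}(\widetilde{Y}_t-\hat{Y}_t)^2}=O(\varepsilon^{\alpha})\) for some \(\alpha>0\), combined with Markov's inequality and the truncation at \(T\), yields \eqref{eq:BKR-approximation} for \(\varepsilon\) chosen sufficiently small depending on \(\delta\), \(T\) and \(\widetilde{h}\).
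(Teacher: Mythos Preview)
Your treatment of the \(X\)-difference is essentially the paper's: telescoping to sign-flip probabilities of \(Y\) and \(\hat{Y}\) over the delay window and invoking the \(\tan^{-1}\) estimate from Lemma \ref{lem:isofiltration-approx} gives \(\Expect{\sup_t(\hat{X}_t-\widetilde{X}_t)^2}=O(\varepsilon)\). The truncation at a large fixed time is also what the paper does.

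The gap is in the \(Y\)-difference. Controlling \(\widetilde{Y}-\hat{Y}\) directly forces you to deal with the cross-term \(\Prob{\sgn(\widetilde{Y}_s)\neq\sgn(\hat{Y}_s)}\), and your proposed ``bootstrap'' for this term is genuinely circular. On \(\{|\widetilde{Y}_s|\geq\widetilde{h}\}\) the sign disagreement forces \(|\widetilde{Y}_s-\hat{Y}_s|\geq\widetilde{h}\), which you bound by Markov's inequality using the very quantity you are estimating; on the complementary set the \(\eta\)-split \(\Prob{|\widetilde{Y}-\hat{Y}|>\eta}+\Prob{|\widetilde{Y}|<\eta}\) again feeds the unknown back into itself. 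Writing the implied inequality schematically as \(M\leq C\varepsilon + C'\eta\sqrt{T} + (C''T/\eta^{2})M\), closure requires \(C''T/\eta^{2}<1\), i.e.\ \(\eta\) of order \(\sqrt{T}\), at which point the \(\eta\sqrt{T}\) term is not small. No Gronwall-type refinement rescues this, because the integrand involves \(\sup_{s\leq t}\) rather than the instantaneous value.

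The paper sidesteps the circularity entirely by controlling \(|\hat{Y}|\) rather than \(\hat{Y}\): it estimates \(\int_0^t\sgn(\hat{Y})\d\hat{Y}-\int_0^t\sgn(\widetilde{Y})\d\widetilde{Y}\). The point is that \(\sgn(\widetilde{Y})\d\widetilde{Y}=-\sgn(\widetilde{X})\sgn(\widetilde{Y})\sgn(\widetilde{Y})J\sgn(Y)\d A=-\sgn(\widetilde{X})J\sgn(Y)\d A\), so the factor \(\sgn(\widetilde{Y})\) cancels against itself and the only cross-sign term left is \(\sgn(\hat{X})\) versus \(\sgn(\widetilde{X})\), which is handled non-circularly by the already-established \(O(\varepsilon)\) bound on \(\hat{X}-\widetilde{X}\) together with the occupation-density split \(\Prob{|\hat{X}-\widetilde{X}|\geq\eta}+\Prob{|\widetilde{X}|<\eta}\). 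This gives control of \(\bigl||\hat{Y}_{T_2}|-|\widetilde{Y}_{T_2}|\bigr|\); the final step is the observation (made just before the lemma) that \(\widetilde{Y}_{T_2}=Y_{T_2}=0\), so \(\bigl||\hat{Y}_{T_2}|-|\widetilde{Y}_{T_2}|\bigr|=|\hat{Y}_{T_2}|=|\hat{Y}_{T_2}-\widetilde{Y}_{T_2}|\). In short: pass to absolute values to kill the self-referential sign term, then exploit \(\widetilde{Y}_{T_2}=0\) to recover the signed estimate at the terminal time.
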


\begin{pf}
Consider the stochastic differential equation for $\hat{X}$:\vspace*{-1pt}
%
\[
\d\hat{X} = \sgn(\hat{Y})\,\d\hat{A} = J \sgn(Y_\sigma)\sgn(\hat
{Y}_\sigma)\sgn(\hat{Y})\,\d A .
\]
Since $\d X=\sgn(Y)\,\d A$, and since $Y\approx Y_\sigma$ and $
\hat{Y}\approx\hat{Y}_\sigma$, it follows that
the coupling between $\hat{X}$ and $X$ approximates a reflection
coupling up to time $T_1$, and after that approximates a synchronized
coupling. Calculating as
in Lemma~\ref{lem:isofiltration-approx}, but recalling from Lemma~\ref
{lem:BKR-reconstruction} that
$\widetilde{X}=(\widetilde{X}_0-(X_{t\wedge T_1}-X_0))+(X_{t\vee
T_1}-X_{T_1})$ is actually adapted to the filtration
of $(X,Y)$,\vspace*{-1pt}
\begin{eqnarray*}
\Expect\Bigl[\sup_{t} \bigl\{ (\hat{X}_t-
\widetilde{X}_t )^2 \bigr\}\Bigr] &\leq&4{\int
_0^\infty\Expect\bigl[ \bigl(J\sgn(Y_\sigma)
\sgn(\hat {Y}_\sigma)\sgn(\hat{Y})-J\sgn(Y) \bigr)^2\bigr]\,\d
t}
\\
&\leq& 32{\int_0^\infty\Prob\bigl[\sgn(
\hat{Y}_\sigma)\neq\sgn(\hat{Y})\bigr]\,\d t} + 32{\int_0^\infty
\Prob\bigl[\sgn({Y}_\sigma)\neq\sgn({Y})\bigr]\,\d t}
\\
&\leq& 105.557\ldots  \times\varepsilon 
.
\end{eqnarray*}
%
Thus we can control the extent to which the approximate reflection
coupling of $X$ and\vspace*{1pt} $\hat{X}$
will deviate from
the reflection coupling of $X$ and $\widetilde{X}$ by using the
Markov inequality: for any $\delta>0$
such that $\varepsilon<\delta^3/(12\times105.557\ldots)$, it
follows that\vspace*{1pt}
%
\begin{equation}
\label{eq:control-of-X} \Prob\Bigl[\sup_{t} \bigl\{\llvert
\hat{X}_t-\widetilde{X}_t\rrvert \bigr\}>\delta/2\Bigr]
\leq\delta/3 .
\end{equation}


We now need to control the approximation of $|\widetilde{Y}|$ by $
|\hat{Y}|$.
We first use the almost-sure finiteness of the stopping time $T_2$ to
select a constant time $t_{\max}$ such that
%
\begin{equation}
\label{eq:control-of-T} \Prob[T_2>t_{\max}]<\delta/3 .
\end{equation}
It suffices to control the approximation of $\int_0^t\sgn(\widetilde
{Y})\,\d\widetilde{Y}$ by $\int_0^t\sgn(\hat{Y})\,\d\hat{Y}$ when
$0\leq t\leq t_{\max}$.
Observe that
\begin{eqnarray*}
\sgn(\hat{Y})\,\d\hat{Y} - \sgn(\widetilde{Y})\,\d\widetilde{Y} &=& - \sgn(\hat{Y})
\sgn(\hat{X})\,\d\hat{A} + \sgn(\widetilde{Y})\sgn (\widetilde{X})\,\d\widetilde{A}
\\
&=& - J \sgn(\hat{Y})\sgn(\hat{X})\sgn(\hat{Y}_\sigma)\sgn
({Y}_\sigma)\,\d A + J \sgn({Y})\sgn(\widetilde{X})\,\d{A} .
\end{eqnarray*}

Hence, (using the same definition of $\psi$)
\begin{eqnarray*}
&&\Expect\biggl[\sup_{t\leq t_{\max}} \biggl\{ \biggl(\int
_0^t \sgn(\hat {Y})\,\d\hat{Y} - \int
_0^t \sgn(\widetilde{Y})\,\d\widetilde{Y}
\biggr)^2 \biggr\}\biggr]
\\
&&\quad \leq4 {\int_0^{t_{\max}}\Expect\bigl[ \bigl( \sgn(
\hat{Y})\sgn(\hat{X})\sgn(\hat{Y}_\sigma)\sgn({Y}_\sigma) -
\sgn({Y})\sgn(\widetilde{X}) \bigr)^2 \bigr]\,\d t}
\\
&&\quad \leq 48 {\int_0^\infty\Prob\bigl[ \sgn(\hat{Y})
\neq\sgn(\hat{Y}_\sigma) \bigr]\,\d t} + 48 {\int_0^\infty
\Prob\bigl[ \sgn({Y})\neq\sgn({Y}_\sigma) \bigr]\,\d t}
\\
&&\qquad {}+ 48 {\int_0^{t_{\max}}\Prob\bigl[ \sgn(\hat{X}) \neq
\sgn(\widetilde{X}) \bigr]\,\d t}
\\
&&\quad \leq 158.336\ldots  \times\varepsilon + 48{\int_0^{t_{\max}}
\Prob\bigl[ \sgn(\hat{X}) \neq\sgn(\widetilde{X}) \bigr]\,\d t} .
\end{eqnarray*}

Now
%
\begin{eqnarray*}
\int_0^{t_{\max}}\Prob\bigl[ \sgn(\hat{X}) \neq\sgn(
\widetilde{X}) \bigr]\,\d t &\leq& \int_0^{t_{\max}} \bigl(
\Prob\bigl[|\hat{X}-\widetilde{X}|\geq\eta\bigr] + \Prob\bigl[|\widetilde{X}|<\eta\bigr] \bigr)\, \d t
\\
&\leq& \int_0^{t_{\max}} \biggl( \frac{1}{\eta^2}
\Expect\bigl[\llvert \hat{X}-\widetilde{X}\rrvert ^2\bigr] + 1\wedge
\frac{2\eta}{\sqrt{2\uppi t}} \biggr) \,\d t
\\
&\leq& \frac{105.557\ldots\times t_{\max}}{\eta^2} \varepsilon + \frac{2}{\uppi}\eta^2 +
\sqrt{\frac{8t_{\max}}{\uppi}}\eta .
\end{eqnarray*}

It follows that if we choose first $\eta$ then $\varepsilon$
small enough that
\begin{eqnarray*}
\eta&< &\frac{1}{12\times48}\sqrt{\frac{\uppi}{8 t_{\max}}} \frac
{\delta^3}{12} ,
\\
\eta&<& \sqrt{\frac{\uppi}{96\times12} \frac{\delta^3}{12}} ,
\\
\varepsilon&<& \frac{\eta^2}{12\times48\times105.557\ldots\times
t_{\max}} \frac{\delta^3}{12} ,
\\
\varepsilon&<& \frac{1}{12\times158.336\ldots} \frac{\delta
^3}{12} ,
\end{eqnarray*}
then
%
\begin{eqnarray}
\label{eq:control-of-Y} &&\Expect\biggl[\sup_{t\leq t_{\max}} \biggl\{ \biggl(\int
_0^t \sgn(\hat {Y})\,\d\hat{Y} - \int
_0^t \sgn(\widetilde{Y})\,\d\widetilde{Y}\biggr)
^2 \biggr\}\biggr] \nonumber
\\[-8pt]\\[-8pt]
&&\quad \leq 158.336\ldots  \times\varepsilon + 48 \biggl( \frac{105.557\ldots\times t_{\max}}{\eta^2} \varepsilon +
\frac{2}{\uppi}\eta^2 + \sqrt{\frac{8t_{\max}}{\uppi}}\eta \biggr) \leq
\frac{1}{3} \times\frac{\delta^3}{12} .\nonumber
\end{eqnarray}
The lemma now follows from the inequalities \eqref{eq:control-of-X},
\eqref{eq:control-of-T}, and an application of the Markov inequality
to \eqref{eq:control-of-Y}.
\end{pf}

We can now state and prove the main result of this section, that BKR
diffusions begun at non-zero points
can be coupled in an {equi-filtration} manner.

\begin{thm}\label{thm:BKR-isofiltration-coupling}
Given two BKR diffusions begun at different non-zero initial points,
they can be coupled in a mutually immersed manner
which succeeds
in almost surely finite time,
using
an infinite sequence of increasingly rapid {equi-filtration} couplings,
each of which approximates the
variant reflection/synchronized coupling but with delays built into
the reflection
and synchronization couplings to render them {equi-filtration}.
\end{thm}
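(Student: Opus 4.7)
The plan is to follow closely the argument for Theorem \ref{thm:isofiltration-coupling}, with the variant reflection / synchronized coupling of Definition \ref{coupling:BKR-immersed-coupling} playing the role of the reflection / synchronized coupling from Section \ref{sec:brownian-local-time}, and with Lemmas \ref{lem:isofiltration-BKR} and \ref{lem:BKR-approximation} replacing Lemmas \ref{lem:isofiltration} and \ref{lem:isofiltration-approx}. The core observation is that Lemma \ref{lem:isofiltration-BKR} already provides a bona fide \equifiltration coupling for any positive delay profile $\psi$, while Lemma \ref{lem:BKR-approximation} provides quantitative control showing that, by choosing $\psi$ sufficiently close to zero, this \equifiltration coupling can be made to differ from the target variant reflection / synchronized immersed coupling by an arbitrarily small amount at its coupling time $T_2$.

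First I would fix a summable sequence $\delta_n \downarrow 0$ (for instance $\delta_n = 4^{-n}$). For each $n$, Lemma \ref{lem:BKR-approximation} furnishes a parameter $\varepsilon_n$, and hence a delay profile $\psi_n$, small enough that the \equifiltration coupling $(\hat X^{(n)}, \hat Y^{(n)})$ produced by Lemma \ref{lem:isofiltration-BKR} approximates the target immersed coupling $(\widetilde X^{(n)}, \widetilde Y^{(n)})$ with sum of absolute coordinate differences below $\delta_n$ at time $T_2^{(n)}$, except on an event of probability at most $\delta_n$. The full \equifiltration coupling is then assembled by concatenation. Each stage begins with matched starting points for $(X,Y)$ and $(\hat X, \hat Y)$ and runs the delay-perturbed scheme \eqref{eq:isofiltration-BKR}; at time $T_2^{(n)}$ the reference pair satisfies $(X,Y) = (\widetilde X, \widetilde Y)$, lying at a vertex of the current diamond on one of the coordinate axes (in particular, not at the origin, in view of Lemma \ref{lem:BKR-reconstruction}), while $(\hat X, \hat Y)$ lies within $\delta_n$ of that vertex. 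A small deterministic alignment then matches the two pairs and provides the common starting point for stage $n+1$.

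Three ingredients are needed to finish. First, via Brownian scaling the expected duration of stage $n$ scales quadratically with the initial separation, so with a rapidly shrinking $\delta_n$ the cumulative time $\sum_n$ (duration of stage $n$) is finite almost surely on the event of no default at any stage. Second, because each stage's terminal vertex lies on a coordinate axis but avoids the origin, the non-degeneracy hypothesis of Lemma \ref{lem:isofiltration-BKR} continues to hold at every stage boundary, so the construction is self-consistent; the initial values of the switching processes $K$ and $\hat K$ at each restart are unambiguously determined by the coordinate-axis location of the terminal vertex. Third, exactly as in Theorem \ref{thm:isofiltration-coupling}, in the event of a default at any stage one restarts the sequence, converting ``positive probability of success in finite time'' into almost sure success.

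The principal obstacle will be verifying uniform applicability of Lemma \ref{lem:BKR-approximation} as stages shrink. The auxiliary constant $t_{\max}$ featuring in the proof of Lemma \ref{lem:BKR-approximation} depends on the law of $T_2$, which itself scales with the square of the initial separation between the two BKR diffusions. Tracking this dependence carefully, and choosing $\varepsilon_n$ small enough relative to the scale of stage $n$, is the technical heart of the argument; by contrast, the concatenation and almost-sure-success mechanics transfer essentially verbatim from the proof of Theorem \ref{thm:isofiltration-coupling}.
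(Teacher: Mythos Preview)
Your proposal takes a genuinely different route from the paper. You iterate the delayed BKR coupling itself, stacking a sequence of increasingly fine applications of Lemma~\ref{lem:BKR-approximation}. The paper instead applies the delayed BKR coupling \emph{once}: Lemma~\ref{lem:BKR-approximation} brings \((X,Y)\) and \((\hat X,\hat Y)\) close at time \(T_2\), with \(Y_{T_2}=0\) and \(|\hat Y_{T_2}|\) small. It then observes that, restarting from \(T_2\), the pair \((|Y|,\,X-|Y|)\) (and likewise \((|\hat Y|,\,\hat X-|\hat Y|)\)) evolves exactly as the absolute value of a Brownian motion together with its local time at \(0\), at least until \(X\) (respectively \(\hat X\)) hits \(0\). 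This reduction lets the paper invoke Theorem~\ref{thm:isofiltration-coupling} and, crucially, Corollary~\ref{cor:finite-range} directly, so that the entire iterative machinery and scaling have already been handled. The localization in Corollary~\ref{cor:finite-range} gives a uniform positive lower bound on the probability of exact coupling before \(X\) or \(\hat X\) reaches \(0\), tending to \(1\) as the post-\(T_2\) separation shrinks.

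What each approach buys: the paper's reduction to the local-time problem cleanly sidesteps exactly the obstacle you flag, namely that the constant \(t_{\max}\) in Lemma~\ref{lem:BKR-approximation} depends on the starting configuration and would have to be re-derived uniformly at every scale. Your approach is in principle workable, but turning it into a proof requires a scaled version of Lemma~\ref{lem:BKR-approximation} (showing that for starting points within \(\delta\) of each other one can take \(\varepsilon\) of order a power of \(\delta\)), which you have identified but not supplied. A minor wording issue: your phrase ``small deterministic alignment then matches the two pairs'' is misleading---as in the proof of Theorem~\ref{thm:isofiltration-coupling}, only the auxiliary reference process \((\widetilde X,\widetilde Y)\) is reset by a jump, while the actual coupled processes \((X,Y)\) and \((\hat X,\hat Y)\) remain continuous throughout.
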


\begin{pf}
From Lemma~\ref{lem:BKR-approximation} it follows that
at time $T_2$ we can bring $(X,Y)$ and $(\hat{X},\hat{Y})$
arbitrarily close together
with probability arbitrarily close to $1$. Moreover $
Y_{T_2}=\widetilde{Y}_{T_2}=0$, so that $\hat{Y}_{T_2}$ will be
arbitrarily close to zero with probability arbitrarily close to $1$.

Restarting at time $T_2$, the evolutions of $(X,|Y|)$ and $(\hat
{X},|\hat{Y})|$ can be related to the behaviour
of Brownian motion and its local time at $0$.
Specifically, $(|Y|, X-|Y|)$ (respectively $(|\hat{Y}|, \hat
{X}-|\widetilde{Y}|)$) has the stochastic
dynamics of the absolute value of Brownian motion together with its
local time at $0$, at least until $X$ (respectively $\hat{X}$)
hits zero.

We can therefore apply the iterative coupling technique of Section~\ref
{sec:equi-filtration} to achieve exact coupling of $(X,Y)$ and $
(\hat{X},\hat{Y})$; the localization
supplied by Corollary~\ref{cor:finite-range} implies that there is a
positive probability of achieving this coupling
before either $X$ or $\hat{X}$ hit $0$, with a uniform positive
lower bound on the probability which tends to $1$ as
the
restarted values at $T_2$ of $X-\hat{X}$, $Y$ and $\hat{Y}$
tend to zero. In the event of default (i.e., the initial variant
reflection/synchronized coupling fails to achieve approximate
coupling at $T_2$, or $X$ or $\hat{X}$ hits $0$
subsequent to $T_2$), then
the whole coupling procedure
can be restarted; the initial delayed variant reflection/synchronized
coupling can be arranged to deliver approximate coupling to an
arbitrarily small tolerance with probability arbitrarily close to $1$,
and then the subsequent iterative coupling
will also have success probability arbitrarily close to $1$. Thus,
it is possible to arrange that the coupling procedure will only need to
be restarted a finite number of times before coupling is achieved.
\end{pf}


\section{Conclusion}\label{sec:conclusion}
In this paper, we have established the basic properties of the
reflection/synchronized coupling for Brownian motion
together with local time, and in particular we have shown it is an
optimal immersed. but not maximal, coupling.
We have also shown that this coupling can be approximated by explicit
{equi-filtration} couplings; moreover
that it can be used as the basis for immersion and {equi-filtration}
couplings of a more complicated diffusion,
thus answering a question arising from filtration theory.

Further questions include the following:
\begin{enumerate}[3.]
\item[1.] Is \'{E}mery's \cite{Emery-2009} immersed BKR
coupling optimal amongst all
immersed couplings of BKR diffusions? We would be surprised if this
were the case,
since there are two distinct variants of \'{E}mery's \cite
{Emery-2009} coupling
depending on whether the construction is based on the $X$ component
or the $Y$ component:
it seems implausible that optimal couplings would permit such a
symmetry. It is of course possible that there is \emph{no} optimal
immersed coupling: it may be the case
that some strategies work better for short time while others work
better for long time.
\item[2.] Reverting to the case of Brownian motion with local time, is it
possible to couple Brownian motion together with local times
accumulated at two or more distinct points?
This seems to be a hard question. Analogous generalizations have been
carried out for the case of coupling Brownian motions together with
iterated time-integrals
(Kendall and Price \cite{KendallPrice-2004}); however
there is a useful linearity in the
time-integral case which is not present here. The question of coupling
finite sets of local times could be
viewed as a question of whether one could couple a finite version of
the celebrated Brownian burglar (Warren and Yor \cite{WarrenYor-1998}, Aldous \cite{Aldous-1998}).
\item[3.] There is interesting territory to be explored in the realm of
couplings which fall short of being maximal but yet are not immersed.
One example in applied
stochastic process theory is supplied by Smith
\cite{Smith-2011a}, who
investigates the mixing time of a simple Gibbs sampler on the unit
simplex using a two-stage coupling of which the first is
immersed (Markovian, in Smith's chosen terminology)
while the second couples an associated partition process anticipatively.
This non-immersed coupling allows Smith to give an
affirmative answer to a conjecture by Aldous concerning the mixing time
of this Gibbs sampler.
Arguably Sigman's \cite{Sigman-2011} perfect
simulation algorithm for
super-stable $M/G/c$ queues can be put in the same category, as this
depends on coupling service times not according
to time of arrival of customer but according to time of start of service.
It would be likely to be most illuminating if one could discover simple
Brownian coupling problems for which gains of a similar kind can be made.
\item[4.] It seems clear that the techniques of this paper can be
generalized to show that immersion couplings of suitably regular diffusions
can always be approximated by {equi-filtration} couplings, and it would
be interesting to see a fully rigorous proof
in a case where the qualification ``suitably regular'' is given a
pleasant and natural meaning.
\end{enumerate}

The underlying reflection/synchronization coupling for Brownian
motion together
with local time is extremely simple, and lends itself to rather
complete calculation. Not only is it an example of the general programme
of coupling Brownian motion together with appropriate functionals
(Ben Arous, Cranston, and Kendall \cite{BenArousCranstonKendall-1995},
Kendall and Price \cite{KendallPrice-2004},
Kendall \cite{Kendall-2007,Kendall-2009d}),
but also it can be viewed as a basic coupling strategy that, like the
reflection coupling of Brownian motion (Lindvall
\cite{Lindvall-1982a})
has the potential to serve as a model in much more general situations.
The application to the BKR diffusion in this paper illustrates this point;
it is hoped that the calculations described above will facilitate the
use of the reflection/synchronization coupling as a building block in
other applications of coupling to probability theory.

\section*{Acknowledgements}
This work was supported in part by EPSRC Research Grant EP/K013939.
I gratefully acknowledge the contribution of Prof. Michel \'{E}mery, whose question concerning BKR diffusions encouraged me to
develop the initial
idea of coupling Brownian motion together with local time, and who
pointed out how to improve the last part of the proof of Theorem~\ref
{thm:isofiltration-coupling}.
I am also thankful for useful remarks from an anonymous referee.





\printhistory

\end{document}